\newtheorem{lemma}{Lemma}[section]
\newtheorem{theorem}{Theorem}[section]
\newtheorem{definition}{Definition}[section]
\newtheorem{proposition}{Proposition}[section]
\newtheorem{remark}{Remark}
\def \a{\alpha}
\def \e{\epsilon}
\def \w{\omega}
\def \Om{\Omega}
\def \om{\omega}
\begin{document}
\title{The Hausdorff dimension of average conformal repellers under random perturbation}
\author{ Yun Zhao$^{\dag}$\ Yongluo Cao$^{\dagger}$\ Jungchao Ban$^{\ddag^{a,b}}$ \\
\small \it $\dag$ Department of mathematics\\
\small \it Suzhou University\\
\small \it Suzhou 215006, Jiangsu, P.R.China\\
\small \it zhaoyun@suda.edu.cn, ylcao@suda.edu.cn\\
\small\it $\ddag^{a}$ Department of mathematics\\
\small\it National Hualien University of Education\\
\small \it Hualien 97003, Taiwan\\
\small \it $\ddag^{b}$ Taida Institute for Mathematical Sciences\\
\small \it  National Taiwan University\\
\small\it Taipei 10617, Taiwan\\
 \small\it jcban@mail.nhlue.edu.tw }
\date{}
 \footnotetext{Partially supported by   NSFC(10571130), NCET, and SRFDP of China. }
\footnotetext{2000 {\it Mathematics Subject classification}:
37C45, 28A78, 34D10.} \maketitle

\begin{center}
\begin{minipage}{120mm}
{\small {\bf Abstract.} We prove that the Hausdorff dimension of
an average  conformal repeller is stable under random
perturbations. Our perturbation model uses the notion of a bundle
random dynamical system. }
\end{minipage}
\end{center}

\vskip1cm

{\small{\bf Key words and phrases} \ Hausdorff dimension,
topological pressure, random dynamical system} \vskip1cm

\section{Introduction.}
In the dimension theory of dynamical system, only the Hausdorff
dimension of invariant sets of conformal dynamical system is well
understood. Since the work of Bowen, who was the first to express
the Hausdorff dimension of an invariant set as a solution of an
equation involving topological pressure. Ruelle \cite{ruelle}
refined Bowen's method and get the following result. If $J$ is a
mixing repeller for a $C^{1+\a}$ conformal expanding map $f$ on a
Riemannian manifold $M$, then the Hausdorff dimension of $J$ can
be obtained as the zero $t_0$ of $t\mapsto \pi_f(-t\log || D_x
f||)$, where $\pi_f$ denotes the topological pressure functional.
This statement is known as the Bowen-Ruelle formula, and we
sometimes call the equation involving topological pressure Bowen
equation. And Gatzouras and Peres relaxed the smoothness
$C^{1+\a}$ to $C^1$ in \cite{gp}.

Recently, different version of topological pressure has become an
useful tool in calculating the Hausdorff dimension of a
non-conformal repeller. For $C^1$ non-conformal repellers, Zhang
used singular values of the derivative $D_xf^n$ for all $n\in
\mathbb{N}$, to define a new equation which involves the limit of
a sequence of topological pressure, then he showed that the upper
bound of the Hausdorff dimension of repeller was given by the
unique solution of the equation, see \cite{zhang} for details.
Barreira considered the same problem in \cite{barreira1}. By using
the non-additive thermodynamic formalism which was introduced in
\cite{barreira2} and singular value of the derivative $D_xf^n$ for
all $n\in \mathbb{N}$, he gave an upper bound of box dimension of
repeller under the additional assumptions that the map was
$C^{1+\a}$ and $\a$-bunched. This automatically implies that for
Hausdorff dimension. In \cite{cao1}, by using the sub-additive
topological pressure which was studied in \cite{cao2}, the author
proved that the upper bound of Hausdorff dimension for $C^1$
non-conformal repellers obtained in \cite{barreira1,falconer,
zhang} were same and it was the unique root of Bowen equation for
sub-additive topological pressure, we point out that the map is
only need to be $C^1$ without any additional condition in
\cite{cao1}.

In \cite{ban}, the authors introduced the notion of
average\underline{} conformal repeller in the deterministic
dynamic systems which was a generalization of quasi-conformal and
asymptotically conformal repeller in \cite{barreira2,pesin}, and
they proved that the Hausdorff dimension and box dimension of
average conformal repellers was the unique root of Bowen equation
for sub-additive topological pressure. In that paper, the map is
only needed $C^1$, without any additional condition.

For random repellers, Kifer proved that the Hausdorff dimension of
a measurable random conformal repeller was the root of the Bowen
equation which can be seen as a random version of the
deterministic case, see \cite{kifer} for details. And in
\cite{bog}, the authors generalized this result to
almost-conformal case. In \cite{zhao1}, using the idea in the
deterministic case \cite{ban},  authors introduced the notion of
random average conformal repeller, and they proved that the
Hausdorff dimension of random average conformal repellers was the
unique root of Bowen equation for random sub-additive topological
pressure which was studied in \cite{zhao2}.

Motivated by the work in \cite{bog}, where the authors showed that
the Hausdorff dimension of the conformal repeller was stable under
random perturbation, we consider a random perturbation of the
deterministic average conformal repeller which is modeled using
the notion of a bundle random dynamical system(RDS for short).
Namely, let $\vartheta$ be an ergodic invertible transformation of
a Lebesgue space $(\Omega, \mathcal{W}, {\mathbb{P}})$ and
consider a measurable family $T=\{T(\om): M\rightarrow M \} $ of
$C^{1+\a}$ maps, that is to say, $(\om, x)\mapsto T(\om)x$ is
assumed to be measurable. This determines a differentiable RDS via
$T(n,\om):= T(\vartheta^{n-1}\om)\circ\cdots \circ
T(\vartheta\om)\circ T(\om)(n\in {\mathbb{N}}) $. Further, Let
$E\subset \Om \times M$ be a measurable set such that all
$\om$-sections $E_\om:=\{ x\in M : (\om,x)\in E\}$ are compact. If
$\mathcal{K}$ denotes the collection of all compact subsets of $M$
endowed with the Hausdorff topology, this is equivalent to saying
that $\mathcal{K}$-valued multifunction $\om\mapsto E_\om$ is
measurable. Here and in what follows we think of $E_\om$ being
equipped with the trace topology, i.e. an open set $A\subset
E_\om$ is of the form $A = B \cap E_\om$ with some open set
$B\subset M$. We call $E$ is $T$-invariant if $T(\om) E_\om =
E_{\vartheta \om}\ \mathbb{P}-a.s.$, and in this situation the
Hausdorff dimension of the fiber $E_\om$ is a $\mathbb{P}$-a.s.
constant, see \cite{cf}. The map $\Theta: E \rightarrow E $ is
defined by $\Theta(\om, x) = (\vartheta \om, T(\om)x)$, and we
call it the skew product transformation.

The aim of this paper is to make rigorous the statement that if a
bundle RDS is close to an average conformal expanding map on a
repeller then the corresponding Hausdorff dimension are close.

The paper is organized as follows. In section 2, we will recall
the main result in \cite{ban}. In section 3, we introduce some
random notions and our model of random perturbation, we point out
that this was essentially inspired by a remarkable result of Liu
\cite{liu}. In section 4, we formulate and prove our main result
which says that the Hausdorff dimension of an average  conformal
repeller is stochastically stable.

\section{Dimension of average conformal repeller}

In this section, we will recall the notion of sub-additive
topological pressure and the main result in \cite{ban} which says
that the Hausdorff dimension of an average conformal repeller can
be given by the unique root of the sub-additive topological
pressure. Moreover, we will give some preliminary results.

Let $f:X\rightarrow X$ be a continuous map on a compact  space $X$
with metric $d$. A subset $E\subset X$ is called an
$(n,\e)$-separated set with respect to $f$ if $x\neq y\in E$
implies $d_n(x,y):=\max_{0\leq i\leq n-1}d(f^ix,f^iy)>\e$. Let
${\mathcal{F}}=\{ \phi_n\}_{n\geq 1}$ denote a sub-additive
potential on $X$, that is to say $\phi_n:X\rightarrow \mathbb{R}$
is continuous for each $n\in \mathbb{N}$ and satisfying
$$
\phi_{n+m}(x)\leq \phi_n(x)+\phi_{m}(f^n(x)),\ \forall
n,m\in\mathbb{N},x\in X.
$$
Following the way in \cite{cao2}, we define the sub-additive
topological pressure
$$
\pi_f(\mathcal{F},n,\e)=\sup \left\{ \sum_{x\in E} \exp \phi_n(x)
: E \ is\ an\ (n,\e)-separated \ subset \ of\ X\right\}
$$
and then call
$$
\pi_f(\mathcal{F})=\lim_{\e\rightarrow 0} \limsup_{n\rightarrow
\infty} \frac{1}{n} \log \pi_f(\mathcal{F},n,\e)
$$
the sub-additive topological pressure of $\mathcal{F}$ with
respect to $f$. If there is no confusion caused, we simply call
$\pi_f(\mathcal{F})$ the sub-additive topological pressure of
$\mathcal{F}$.

\begin{remark} \label{zhu2} \rm
\it (1)When the continuous potential $\mathcal{F}=\{\phi_n\}$ on
$X$ is additive, i.e. $\phi_n(x)=\sum_{i=0}^{n-1}\phi (f^ix)$ for
some continuous function $\phi:X\rightarrow \mathbb{R}$, then
$\pi_f(\mathcal{F})$ is the classical topological pressure, see
\cite{walters} for details, and we denote it simply by
$\pi_f(\phi)$; (2)When the continuous potential
$\mathcal{F}=\{\phi_n\}$ on $X$ is sup-additive, that is to say,
$\phi_{n+m}(x)\geq \phi_{n}(x)+\phi_m(f^nx),\ \forall
n,m\in\mathbb{N},x\in X$, we also can define the sup-additive
topological pressure. And the pressures are equal under some
special case, see \cite{ban}.
\end{remark}

Let $\mathcal{M}(X,f)$ denote the space of all $f$-invariant Borel
probability measures and $\mathcal{E}(X,f)$ denote the subset of
$\mathcal{M}(X,f)$ with ergodic measures. For $\mu\in
\mathcal{M}(X,f)$, let $h_{\mu}(f)$ denote the measure-theoretic
entropy of $f$ with respect to $\mu$, and let
$\mathcal{F}_{*}(\mu)$ denote the following limit
$$
\mathcal{F}_{*}(\mu)=\lim_{n\rightarrow\infty} \frac{1}{n} \int
\phi_n {\mathrm{d}}\mu.
$$
The relation between $\pi_f(\mathcal{F}),h_{\mu}(f)$ and
$\mathcal{F}_{*}(\mu)$ is given by the following variational
principle which is proved in \cite{cao2}, and the random version
of the following theorem is proved in \cite{zhao2}.

\begin{theorem}[Variational  principle] \label{dl21} \rm
\it Let $\mathcal{F}$ be a sub-additive potentials on a compact
metric space $X$, and $f:X\rightarrow X$ is a continuous
transformation, then
 $$   \pi_f({\mathcal{F}})=\left\{
     \begin{array}{cc}
     &-\infty, ~~~~~~~~~~~~~~~~~~~~~~~ if\ {\mathcal{F}}_{*}(\mu)=-\infty\ for\ all\ \mu \in
     \mathcal{M}(X,f)\\
     &\sup \{ h_{\mu}(f) +{\mathcal{F}}_{*}(\mu) : \mu \in
     \mathcal{M}(X,f),{\mathcal{F}}_{*}(\mu)\neq -\infty \},\  otherwise.
     \end{array}
     \right.
$$
 \end{theorem}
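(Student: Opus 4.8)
The plan is to prove the two inequalities separately, treating the generic case and absorbing the $-\infty$ case into the upper bound argument. First I would record two preliminaries. Since $\mu$ is $f$-invariant, the sequence $n\mapsto\int\phi_n\,\mathrm{d}\mu$ is sub-additive, so by Fekete's lemma the limit $\mathcal{F}_*(\mu)=\inf_n\frac1n\int\phi_n\,\mathrm{d}\mu$ exists in $[-\infty,\infty)$; moreover $\mathcal{F}_*$ is affine on $\mathcal{M}(X,f)$, and by Kingman's sub-additive ergodic theorem $\frac1n\phi_n(x)\to\mathcal{F}_*(\mu)$ for $\mu$-a.e.\ $x$ when $\mu$ is ergodic. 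Using the ergodic decomposition together with the affinity of $\mathcal{F}_*$ and of $\mu\mapsto h_\mu(f)$, the supremum over $\mathcal{M}(X,f)$ equals the supremum over $\mathcal{E}(X,f)$, so for the lower bound it suffices to treat ergodic measures.

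For the lower bound $\pi_f(\mathcal{F})\ge h_\mu(f)+\mathcal{F}_*(\mu)$ with $\mu$ ergodic and $\mathcal{F}_*(\mu)>-\infty$, I would fix $\delta>0$ and use the pointwise convergence above to find, for all large $n$, a set $G_n$ with $\mu(G_n)>\tfrac12$ on which $\phi_n(x)\ge n(\mathcal{F}_*(\mu)-\delta)$. Katok's entropy formula supplies an $(n,\epsilon)$-separated subset $E$ of $G_n$ with $|E|\ge\exp\big(n(h_\mu(f)-\delta)\big)$ for $\epsilon$ small, whence $\sum_{x\in E}\exp\phi_n(x)\ge\exp\big(n(h_\mu(f)+\mathcal{F}_*(\mu)-2\delta)\big)$. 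Taking $\tfrac1n\log$, then letting $n\to\infty$, $\epsilon\to0$ and $\delta\to0$, gives the claim.

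The upper bound is the main obstacle. I would choose $\epsilon>0$ and a sequence $n_k\to\infty$ nearly realising the $\limsup$ defining $\pi_f(\mathcal{F})$, pick $(n_k,\epsilon)$-separated sets $E_k$ with $\Lambda_k:=\sum_{x\in E_k}\exp\phi_{n_k}(x)$ close to the supremum, and form the weighted measures $\sigma_k=\Lambda_k^{-1}\sum_{x\in E_k}e^{\phi_{n_k}(x)}\delta_x$ together with their Cesàro averages $\mu_k=\frac1{n_k}\sum_{i=0}^{n_k-1}(f^i)_*\sigma_k$. Passing to a weak-$*$ limit $\mu$ of a subsequence of $(\mu_k)$ produces an $f$-invariant measure. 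The two ingredients are then a Misiurewicz-type partition argument and a sub-additive averaging estimate. For the former, choose a partition $\xi$ of diameter $<\epsilon$ with $\mu(\partial\xi)=0$, so that each point of $E_k$ lies in a distinct atom of $\bigvee_{i=0}^{n_k-1}f^{-i}\xi$; this yields $\log\Lambda_k-\int\phi_{n_k}\,\mathrm{d}\sigma_k\le H_{\sigma_k}\big(\bigvee_{i=0}^{n_k-1}f^{-i}\xi\big)$ and, after the standard convexity passage to the limit, $\limsup_k\frac1{n_k}\big(\log\Lambda_k-\int\phi_{n_k}\,\mathrm{d}\sigma_k\big)\le h_\mu(f,\xi)\le h_\mu(f)$. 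For the latter, for each fixed $m$ the decomposition of $\phi_n$ along the $m$ staggered blocks gives $\frac1n\phi_n(x)\le\frac1n\sum_{i=0}^{n-1}\frac1m\phi_m(f^i x)+o(1)$, so integrating against $\sigma_k$ and using $\mu_k\to\mu$ with $\phi_m$ continuous yields $\limsup_k\frac1{n_k}\int\phi_{n_k}\,\mathrm{d}\sigma_k\le\frac1m\int\phi_m\,\mathrm{d}\mu$; letting $m\to\infty$ bounds this by $\mathcal{F}_*(\mu)$. Adding the two estimates gives $\pi_f(\mathcal{F})\le h_\mu(f)+\mathcal{F}_*(\mu)\le\sup\{h_\nu(f)+\mathcal{F}_*(\nu)\}$.

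Finally, the degenerate case is automatic from the same construction: the limit measure $\mu$ always satisfies $\pi_f(\mathcal{F})\le h_\mu(f)+\mathcal{F}_*(\mu)$, so if $\mathcal{F}_*(\nu)=-\infty$ for every invariant $\nu$ (with $h_\mu(f)<\infty$, as holds under the standing assumptions), the right-hand side is $-\infty$ and hence $\pi_f(\mathcal{F})=-\infty$. The delicate point throughout is the interplay between the sub-additive averaging step and the convergence $\mu_k\to\mu$: one must first fix $m$, send $k\to\infty$, and only then send $m\to\infty$, since the $o(1)$ error in the block decomposition depends on $m$ and must be killed by dividing by $n_k$ before the continuity of $\phi_m$ is invoked.
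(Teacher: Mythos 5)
The paper does not actually prove Theorem \ref{dl21} — it quotes it from \cite{cao2} — and your sketch reconstructs essentially the proof given there: Kingman's sub-additive ergodic theorem plus Katok's entropy formula (with ergodic decomposition and affinity of $h_\mu$ and $\mathcal{F}_*$) for the lower bound, and the Misiurewicz-type weighted separated-set measures $\sigma_k$, $\mu_k$ combined with the staggered-block sub-additive estimate $\frac1n\phi_n\leq\frac1n\sum_{i=0}^{n-1}\frac1m\phi_m\circ f^i+o(1)$, taken in the correct order of limits ($k\to\infty$ before $m\to\infty$), for the upper bound. The only caveat is your parenthetical ``$h_\mu(f)<\infty$'' in the degenerate case, which in fact requires $h_{\mathrm{top}}(f)<\infty$ — a hypothesis operative in \cite{cao2} (and harmless in the present paper, where $f$ is a $C^1$ map on a compact invariant set) but not restated in the theorem as quoted.
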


\begin{proposition} \label{jiamt21} \rm
 \it let $f_i: X_i \rightarrow X_i(i=1,2)$ be a continuous map of a
 compact metric space $(X_i,d_i)$, and $\mathcal{F}=\{ \phi_n\}$ is a sub-additive potential on $X_2$. If $\varphi: X_1\rightarrow
 X_2$ is a surjective continuous map with $\varphi \circ
 f_1=f_2\circ
 \varphi$ then $\pi_{f_2}(\mathcal{F})\leq \pi_{f_1}(\mathcal{F\circ
 \varphi})$, and if $\varphi$ is a homeomorphism then $\pi_{f_2}(\mathcal{F})=\pi_{f_1}(\mathcal{F\circ
 \varphi})$, where $\mathcal{F\circ
 \varphi}=\{ \phi_n\circ \varphi \}$.
 \end{proposition}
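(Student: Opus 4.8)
The plan is to compare $(n,\e)$-separated sets on the two spaces by lifting points through $\varphi$, exploiting the semiconjugacy $\varphi\circ f_1=f_2\circ\varphi$ (and its iterates $\varphi\circ f_1^{\,l}=f_2^{\,l}\circ\varphi$) together with the uniform continuity of $\varphi$. First I would record that $\mathcal{F}\circ\varphi=\{\phi_n\circ\varphi\}$ is genuinely a sub-additive potential on $X_1$, so that the right-hand pressure is well defined: applying the sub-additivity of $\mathcal{F}$ at the point $\varphi(x)$ and rewriting $f_2^{\,n}(\varphi(x))=\varphi(f_1^{\,n}(x))$ yields
$$
\phi_{n+m}(\varphi(x))\le \phi_n(\varphi(x))+\phi_m\big(\varphi(f_1^{\,n}(x))\big),
$$
which is exactly the sub-additivity of $\{\phi_n\circ\varphi\}$ along $f_1$.

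For the inequality $\pi_{f_2}(\mathcal{F})\le\pi_{f_1}(\mathcal{F}\circ\varphi)$, fix $\e>0$. Since $X_1$ is compact and $\varphi$ is continuous, $\varphi$ is uniformly continuous, so there is $\delta=\delta(\e)>0$ with $d_1(x,y)<\delta\Rightarrow d_2(\varphi(x),\varphi(y))<\e$, and $\delta(\e)\to0$ as $\e\to0$. Given any $(n,\e)$-separated set $E=\{y_1,\dots,y_k\}\subset X_2$ for $f_2$, use surjectivity of $\varphi$ to choose $x_i\in X_1$ with $\varphi(x_i)=y_i$. I claim $\{x_1,\dots,x_k\}$ is $(n,\delta)$-separated for $f_1$: if some pair satisfied $\max_{0\le l\le n-1}d_1(f_1^{\,l}x_i,f_1^{\,l}x_j)\le\delta$, then uniform continuity together with $\varphi(f_1^{\,l}x_i)=f_2^{\,l}y_i$ would force $\max_{0\le l\le n-1}d_2(f_2^{\,l}y_i,f_2^{\,l}y_j)<\e$, contradicting separation of $E$; in particular the $x_i$ are distinct. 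Since $\phi_n(y_i)=(\phi_n\circ\varphi)(x_i)$, the two exponential sums coincide, whence
$$
\sum_{y\in E}\exp\phi_n(y)=\sum_{i=1}^{k}\exp\big((\phi_n\circ\varphi)(x_i)\big)\le \pi_{f_1}(\mathcal{F}\circ\varphi,n,\delta).
$$
Taking the supremum over all such $E$ gives $\pi_{f_2}(\mathcal{F},n,\e)\le\pi_{f_1}(\mathcal{F}\circ\varphi,n,\delta)$; passing to $\limsup_{n}\frac1n\log(\cdot)$ and then letting $\e\to0$ (hence $\delta\to0$), and using that $\limsup_n\frac1n\log\pi_{f_1}(\mathcal{F}\circ\varphi,n,\delta)$ is monotone in $\delta$ so that driving $\e\to0$ recovers exactly $\lim_{\delta\to0}$, i.e. $\pi_{f_1}(\mathcal{F}\circ\varphi)$, yields the claim.

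For the homeomorphism case I would simply apply the inequality just proved to the inverse conjugacy. If $\varphi$ is a homeomorphism then $\varphi^{-1}:X_2\to X_1$ is a continuous surjection satisfying $\varphi^{-1}\circ f_2=f_1\circ\varphi^{-1}$, and $\mathcal{F}\circ\varphi$ is a sub-additive potential on $X_1$. Applying the first part to $f_2,f_1,\varphi^{-1}$ and the potential $\mathcal{F}\circ\varphi$ gives
$$
\pi_{f_1}(\mathcal{F}\circ\varphi)\le\pi_{f_2}\big((\mathcal{F}\circ\varphi)\circ\varphi^{-1}\big)=\pi_{f_2}(\mathcal{F}),
$$
since $(\phi_n\circ\varphi)\circ\varphi^{-1}=\phi_n$. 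Combined with $\pi_{f_2}(\mathcal{F})\le\pi_{f_1}(\mathcal{F}\circ\varphi)$ this forces equality.

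I expect the only genuinely delicate point to be the lifting step and the accompanying limit bookkeeping: one must check that distinct, properly separated preimages $x_i$ can always be chosen (which follows from surjectivity plus the separation contradiction above), and that the scale $\delta(\e)$ can be driven to $0$ along with $\e$ so that the two nested limits defining the pressures line up. Everything else is a direct translation between the two spaces through $\varphi$, and the preservation of the exponential weights is immediate because $\phi_n$ and $\phi_n\circ\varphi$ agree at corresponding points.
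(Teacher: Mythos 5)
Your proof is correct and follows essentially the same route as the paper's: you lift an $(n,\e)$-separated set of $X_2$ through the surjection $\varphi$, use uniform continuity to get an $(n,\delta)$-separated set in $X_1$ carrying the same exponential sum, pass to the limits with $\delta(\e)\to 0$, and settle the homeomorphism case by applying the inequality to $f_2, f_1, \varphi^{-1}$ with the potential $\mathcal{F}\circ\varphi$, exactly as in the paper. The only differences are cosmetic (the contrapositive phrasing of uniform continuity and the strict-versus-nonstrict $\delta$ bookkeeping, both harmless).
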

 \begin{proof}
 We first check that $\mathcal{F\circ
 \varphi}$ is indeed a sub-additive potential on the compact
 metric space $X_1$. In fact
 \begin{eqnarray*}
\phi_{n+m}\circ \varphi (x)\leq \phi_n (\varphi x) +
\phi_{m}(f_{2}^{n}\varphi x)=\phi_n\circ \varphi (x) +\phi_m \circ
\varphi(f_{1}^{n}x)
 \end{eqnarray*}
 the equality follows from the fact that $\varphi \circ
 f_1=f_2\circ
 \varphi$.

 Let $\e >0$ and choose $\delta >0$ such that
 $d_2(\varphi(x),\varphi(y))>\e$ implies $d_{1}(x,y)>\delta$, this
 fact follows from the uniform
 continuity of $\varphi$.

 Let $E$ be an $(n,\e)$-separated set with respect to $f_2$. Since $\varphi$ is surjective, there exists a subset $F\subset
 X_1$ so that $\varphi |_F: F\rightarrow E$ is a bijection. It
 follows from the above observation that $F$ is an
 $(n,\delta)$-separated set with respect to $f_1$.
  Hence, we have
 $$
 \begin{array}{ll}
 \pi_{f_2}(\mathcal{F},n,\e)&=\sup \{ \sum_{x\in E} \exp
 \phi_n( x): E \mbox{ is an } (n,\e)\mbox{-separated subset of } X_2\}\\
&=\sup \{ \sum_{y\in F} \exp
 \phi_n(\varphi y):  E \mbox{ is an } (n,\e)\mbox{-separated subset of } X_2 \\
 &\qquad \mbox{and }\varphi |_F: F\rightarrow E \mbox{ is a bijection}\}\\
&\leq  \sup \{ \sum_{y\in F} \exp
 \phi_n(\varphi y): F \mbox{ is an } (n,\delta)\mbox{-separated subset of } X_1\}\\
&=\pi_{f_1}(\mathcal{F}\circ \varphi,n,\delta)
 \end{array}
 $$
Since $\e\rightarrow 0$ then $\delta\rightarrow 0$, then we can
have
 $$
\pi_{f_2}(\mathcal{F})\leq \pi_{f_1}(\mathcal{F\circ
 \varphi})
 $$

 If $\varphi$ is a homeomorphism then we can apply the above with
 $f_1,f_2,\varphi, \mathcal{F}$ replaced by $f_2, f_1, \varphi^{-1}, \mathcal{F}\circ
 \varphi$ respectively to give $\pi_{f_2}(\mathcal{F})\geq \pi_{f_1}(\mathcal{F\circ
 \varphi})$. Thus  the proof is finished.
 \end{proof}

 \begin{proposition} \label{jiamt22} \rm
 \it Let $f:X\rightarrow X$ be a continuous map on a compact metric
space, and $ \phi : X\rightarrow \mathbb{R}$ is a continuous
function  on $X$. Suppose $ \varphi_{\e}: X\rightarrow \mathbb{R}$
is a continuous function on $X$ for every $\e>0$ and
$\lim_{\e\rightarrow 0}\varphi_{\e}=\phi$, then
$$
\lim_{\e\rightarrow 0} \pi_{f}(\varphi_{\e})=\pi_{f}(\phi).
$$
 \end{proposition}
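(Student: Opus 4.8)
The plan is to reduce the statement to the uniform (Lipschitz) dependence of the classical topological pressure on its potential. First I interpret the hypothesis $\lim_{\e\rightarrow 0}\varphi_{\e}=\phi$ as convergence in the supremum norm, so that $\|\varphi_{\e}-\phi\|_{\infty}\rightarrow 0$ as $\e\rightarrow 0$; since $X$ is compact and all functions involved are continuous, this is the natural reading, and it is in fact necessary (mere pointwise convergence does not force the pressures to converge). By Remark \ref{zhu2}(1) the quantities $\pi_{f}(\varphi_{\e})$ and $\pi_{f}(\phi)$ are the classical topological pressures associated with the additive potentials $\sum_{i=0}^{n-1}\varphi_{\e}(f^{i}x)$ and $\sum_{i=0}^{n-1}\phi(f^{i}x)$.

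The key step is to establish, for any two continuous functions $\psi,\phi$ on $X$, the estimate
$$|\pi_{f}(\psi)-\pi_{f}(\phi)|\le \|\psi-\phi\|_{\infty}.$$
To prove it I compare the defining sums over separated sets, renaming the separation scale $\tau$ to avoid a clash with the perturbation parameter $\e$. Fix $\tau>0$ and let $E$ be any $(n,\tau)$-separated set. Writing $\delta=\|\psi-\phi\|_{\infty}$, the pointwise bound $\bigl|\sum_{i=0}^{n-1}\psi(f^{i}x)-\sum_{i=0}^{n-1}\phi(f^{i}x)\bigr|\le n\delta$ gives $\exp\sum_{i=0}^{n-1}\psi(f^{i}x)\le e^{n\delta}\exp\sum_{i=0}^{n-1}\phi(f^{i}x)$ for every $x\in E$. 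Summing over $E$ and taking the supremum over all $(n,\tau)$-separated sets yields $\pi_{f}(\psi,n,\tau)\le e^{n\delta}\,\pi_{f}(\phi,n,\tau)$. Applying $\frac{1}{n}\log$, letting $n\rightarrow\infty$ and then $\tau\rightarrow 0$ produces $\pi_{f}(\psi)\le \delta+\pi_{f}(\phi)$; exchanging the roles of $\psi$ and $\phi$ gives the reverse inequality, hence the claimed Lipschitz bound.

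Finally I apply this with $\psi=\varphi_{\e}$ to obtain $|\pi_{f}(\varphi_{\e})-\pi_{f}(\phi)|\le\|\varphi_{\e}-\phi\|_{\infty}$, and let $\e\rightarrow 0$; the right-hand side tends to $0$ by the hypothesis, which is exactly the assertion $\lim_{\e\rightarrow 0}\pi_{f}(\varphi_{\e})=\pi_{f}(\phi)$.

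I do not expect any serious obstacle here: the whole argument rests on the elementary inequality $\bigl|\sum_{i=0}^{n-1}\psi(f^{i}x)-\sum_{i=0}^{n-1}\phi(f^{i}x)\bigr|\le n\|\psi-\phi\|_{\infty}$, which is uniform in the point $x$ and in the choice of separated set, so it passes cleanly through both the $\limsup_{n\rightarrow\infty}$ and the $\lim_{\tau\rightarrow 0}$ in the definition of the pressure. The only points demanding a little care are the notational collision between the perturbation parameter and the separation scale, resolved by renaming the latter, and making explicit that the convergence $\varphi_{\e}\rightarrow\phi$ is understood uniformly.
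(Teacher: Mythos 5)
Your proof is correct and follows essentially the same route as the paper: the paper disposes of the statement in one line by invoking the continuity of the classical topological pressure in the potential, and your Lipschitz estimate $|\pi_{f}(\psi)-\pi_{f}(\phi)|\le \|\psi-\phi\|_{\infty}$, obtained by comparing sums over $(n,\tau)$-separated sets, is precisely the standard proof of that continuity (cf.\ Walters' book, cited as \cite{walters}). Your explicit remark that the convergence $\varphi_{\e}\rightarrow\phi$ must be uniform is a sound clarification of what the paper leaves implicit.
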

 \begin{proof} This immediately follows from the continuity of the
 classical topological pressure.\end{proof}

 Now we introduce the definition of average conformal repeller.
 And
 the dimension of the repeller can be obtained by the unique root
 of the corresponding  sub-additive topological pressure.

 Let $M$ be a $C^{\infty}$ $m$-dimensional  Riemannian manifold.
 Let $U$ be an open subset of $M$ and $f:U\rightarrow M$ be a
 $C^1$ map. Suppose $J\subset U$ is a compact $f$-invariant
 subset. Let $\mathcal{M}(f|_J)$, $\mathcal{E}(f|_J)$ denote
 the set of all $f$-invariant measures and the set of all ergodic
 invariant measures supported on $J$ respectively. For any $\mu\in
 \mathcal{E}(f|_J)$, by the Oseledec multiplicative ergodic
 theorem (see \cite{osel}), we can define Lyapunov exponents $\lambda_1(\mu)\leq \lambda_2(\mu)\leq \cdots \leq
 \lambda_m(\mu)$, $m=\mathrm{dim}M$.

 \begin{definition} \label{dy21} \rm
 \it A compact $f$-invariant set $J$ is called an average conformal repeller for $f$ if
 for any $\mu\in \mathcal{E}(f|_J)$, $\lambda_1(\mu)= \lambda_2(\mu)= \cdots
 =\lambda_m(\mu)>0$.
 \end{definition}

 \begin{remark} \label{zhu1} \rm
 \it We point out that if $J$ is an average conformal
 repeller for $f$, it is indeed a repeller in the usual  way(see
 \cite{cao3}) that: $\exists \lambda>1, C>0$ such that for all $x\in J$ and $v\in T_xM$
 $$
||D_xf^n(v)||\geq C\lambda^n||v||,~~~\forall n\geq 1.
 $$
 \end{remark}

 \begin{proposition} \label{jiamt23} \rm
 \it If $J$ is an average conformal repeller for $f$, then
 $$
 \pi_f(\Phi)=\lim_{n\rightarrow \infty} \frac{1}{n}\pi_{f^n}(-\log \| D_xf^n\|)
 $$
 where $\Phi=\{-\log \| D_xf^n\|\}_{n\geq 1}$ is a
 sup-additive potential and $\pi_f(\Phi)$, $\pi_{f^n}(-\log \| D_xf^n\|)$ denote the sup-additive
topological pressure of $\Phi$ with respect to $f$, classical
topological pressure
 of $-\log \| D_xf^n\|$ with respect to $f^n$ respectively.
 \end{proposition}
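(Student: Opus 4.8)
The plan is to prove the two inequalities $\limsup_{n}\frac1n\pi_{f^n}(-\log\|D_xf^n\|)\le\pi_f(\Phi)$ and $\liminf_{n}\frac1n\pi_{f^n}(-\log\|D_xf^n\|)\ge\pi_f(\Phi)$; together they give existence of the limit and the claimed equality. Throughout I write $\phi_n=-\log\|D_xf^n\|$ and introduce the companion potential $\psi_n=-\log m(D_xf^n)$, where $m(A)=\|A^{-1}\|^{-1}$ is the smallest singular value. Submultiplicativity of the operator norm shows at once that $\Phi=\{\phi_n\}$ is sup-additive and $\Psi=\{\psi_n\}$ is sub-additive, and since $m(A)\le\|A\|$ one always has $\phi_n\le\psi_n$. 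The single place where the average conformal hypothesis is used is the uniform estimate
$$\delta_n:=\frac1n\sup_{x\in J}\bigl(\log\|D_xf^n\|-\log m(D_xf^n)\bigr)\longrightarrow 0,$$
which holds because all Lyapunov exponents of every ergodic measure coincide (this is the corresponding lemma of \cite{ban}); it yields the sandwich $\phi_n\le\psi_n\le\phi_n+n\delta_n$ on $J$.

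For the upper bound, which I expect to be the crux, I would argue with separated sets rather than with invariant measures, so as to avoid the $f^n$-invariant-but-not-$f$-invariant measures. Fix $n$. Iterating the sup-additivity of $\Phi$ gives the telescoping bound $\sum_{j=0}^{k-1}\phi_n(f^{nj}y)\le\phi_{nk}(y)$, so for the $f^n$-Birkhoff sum of $\psi_n$ the sandwich yields $S_k^{f^n}\psi_n(y)\le\phi_{nk}(y)+kn\delta_n$. If $E$ is $(k,\e)$-separated for $f^n$ then it is automatically $(nk,\e)$-separated for $f$, whence $\sum_{y\in E}\exp S_k^{f^n}\psi_n(y)\le e^{kn\delta_n}\,\pi_f(\Phi,nk,\e)$. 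Dividing by $k$, letting $k\to\infty$ and then $\e\to 0$ produces $\pi_{f^n}(\psi_n)\le n\delta_n+n\pi_f(\Phi)$; since $\phi_n\le\psi_n$ forces $\pi_{f^n}(\phi_n)\le\pi_{f^n}(\psi_n)$, dividing by $n$ and letting $n\to\infty$ gives $\limsup_n\frac1n\pi_{f^n}(\phi_n)\le\pi_f(\Phi)$.

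For the lower bound I would use the variational principle for the sup-additive pressure together with the elementary relation $h_\mu(f^n)=n\,h_\mu(f)$, valid for every $f$-invariant $\mu$. Because such a $\mu$ is in particular $f^n$-invariant, the classical variational principle for $\pi_{f^n}$ gives $\frac1n\pi_{f^n}(\phi_n)\ge h_\mu(f)+\frac1n\int\phi_n\,d\mu$; since $n\mapsto\int\phi_n\,d\mu$ is super-additive, $\frac1n\int\phi_n\,d\mu\to\Phi_*(\mu)$, so letting $n\to\infty$ and taking the supremum over $\mu\in\mathcal{M}(f|_J)$ yields $\liminf_n\frac1n\pi_{f^n}(\phi_n)\ge\sup_\mu\bigl(h_\mu(f)+\Phi_*(\mu)\bigr)=\pi_f(\Phi)$, the last equality being the variational principle of Theorem~\ref{dl21} in its sup-additive form from \cite{ban}.

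The main obstacle is precisely the upper bound: for a generic sub-additive potential the sums $S_k^{f^n}\psi_n$ admit no useful upper control, and it is only the average conformal hypothesis, through the sandwich $\psi_n\le\phi_n+n\delta_n$ against the genuinely sup-additive $\phi_n$, that converts the obstructive term into the vanishing error $kn\delta_n$. Everything else is the bookkeeping of passing between $(k,\e)$-separation for $f^n$ and $(nk,\e)$-separation for $f$, together with the standard super-additive convergence $\frac1n\int\phi_n\,d\mu\to\Phi_*(\mu)$.
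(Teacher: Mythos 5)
Your proof is correct, and while your lower bound coincides exactly with the paper's (classical variational principle for $f^n$ applied to $f$-invariant measures, $h_\mu(f^n)=nh_\mu(f)$, super-additive convergence of $\frac1n\int\phi_n\,d\mu$, then the sup-additive variational principle of \cite{ban}), your upper bound takes a genuinely different route. The paper first passes from $\phi_n$ to $\psi_n=-\log m(D_xf^n)$ by monotonicity, as you do, but then concludes by citing two black boxes: Proposition 2.2 of \cite{cao1}, which gives $\lim_k\frac1k\pi_{f^k}(\psi_k)=\pi_f(\Psi)$ for the sub-additive potential $\Psi$ and which requires upper semi-continuity of the entropy map (supplied here by expansiveness of the repeller, via Remark \ref{zhu1}), together with the identity $\pi_f(\Psi)=\pi_f(\Phi)$ from \cite{ban}. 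You instead prove the bound $\frac1n\pi_{f^n}(\psi_n)\leq \delta_n+\pi_f(\Phi)$ directly by separated-set counting: telescoping the sup-additivity of $\Phi$, sandwiching via the uniform asymptotic conformality $\delta_n\to 0$ (the key lemma of \cite{ban}, which is also what underlies the identity $\pi_f(\Psi)=\pi_f(\Phi)$ that the paper invokes), and observing that $(k,\epsilon)$-separation for $f^n$ implies $(nk,\epsilon)$-separation for $f$. What your route buys is self-containedness and the elimination of any appeal to upper semi-continuity of entropy or to \cite{cao1}; it even yields the quantitative estimate $\frac1n\pi_{f^n}(\phi_n)\leq\delta_n+\pi_f(\Phi)$ for every fixed $n$, which combined with your lower bound recovers the conclusion of Proposition 2.2 of \cite{cao1} for this particular potential. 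What the paper's route buys is brevity and reuse of general machinery valid for arbitrary sub-additive potentials, where your telescoping-plus-sandwich trick would fail, as you correctly note. One small step you glossed over but which is sound: after dividing by $k$, one should write $\frac1k\log\pi_f(\Phi,nk,\epsilon)=n\cdot\frac1{nk}\log\pi_f(\Phi,nk,\epsilon)$ and bound the limsup along the subsequence $nk$ by the full limsup before letting $\epsilon\to 0$.
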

 \begin{proof}
 Let $\Psi=\{ -\log m(D_xf^n)\}_{n\geq 1}$ denotes the
 sub-additive potential. First note that by the definition of
 topological pressure, we have
 $$
 \frac{1}{k} \pi_{f^k}(-\log\| D_xf^k\|) \leq
 \frac{1}{k}\pi_{f^k}(-\log m(D_xf^k)),\ \forall k\geq 1.
 $$
 And since $J$ is an average conformal repeller, the  measure-theoretic entropy
 map $\mu\mapsto h_{\mu}(f)$ is upper-semi-continuous by remark \ref{zhu1}. By
 proposition 2.2 in \cite{cao1}, we have
 $$
\lim_{k\rightarrow\infty}\frac{1}{k}\pi_{f^k}(-\log
m(D_xf^k))=\pi_{f}(\Psi)
 $$
 where $\pi_{f}(\Psi)$ denotes the sub-additive topological
 pressure of $\Psi$ with respect to $f$. Thus we have
 \begin{eqnarray}\label{jiads21}
\limsup_{k\rightarrow\infty}\frac{1}{k} \pi_{f^k}(-\log\|
D_xf^k\|)\leq \pi_{f}(\Psi)=\pi_f(\Phi),
 \end{eqnarray}
where the last equality is proved in \cite{ban} since $J$ is an
average conformal  repeller for $f$.

 On the other hand, for any $\mu\in \mathcal{M}(f|_J)\subset
 \mathcal{M}(f^k|_J)$, we have
 \begin{eqnarray*}
 h_{\mu}(f)+\Phi_{*}(\mu)&=&\lim_{k\rightarrow\infty}\frac{1}{k}(h_{\mu}(f^k)+\int -\log
\| D_xf^k \|{\mathrm{d}}\mu)\\
&\leq&\liminf_{k\rightarrow\infty}\frac{1}{k}\pi_{f^k}(-\log \|
D_xf^k \|),
 \end{eqnarray*}
 the last inequality is follows from the classical variational
 principle for additive topological pressure of $-\log \|
D_xf^k \|$ with respect to $f^k$, see \cite{walters}. Again
because $J$ is an average conformal repeller, by the
 variational principle for the sup-additive topological pressure(see
 \cite{ban}), we have
 \begin{eqnarray} \label{jiads22}
\pi_f(\Phi)\leq
\liminf_{k\rightarrow\infty}\frac{1}{k}\pi_{f^k}(-\log \| D_xf^k
\|).
 \end{eqnarray}
 Thus the desired result immediately follows from (\ref{jiads21})
 and (\ref{jiads22}).
 \end{proof}

The dimension of an average conformal repeller can be given by the
following theorem in \cite{ban}.

\begin{theorem} \label{dl22} \rm
\it Let $f$ be $C^1$ dynamical system and $J$ be an average
conformal repeller for $f$, then the Hausdorff dimension of $J$ is
zero of $t\mapsto \pi_f(-t\Psi)$, where $\Psi=\{ \log m(D_xf^n):
x\in J, n\in \mathbb{N}\}$ and $m(A)= ||A^{-1}||^{-1}$.
\end{theorem}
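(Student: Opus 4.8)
The plan is to establish the two inequalities $\dim_H J\le t_0$ and $\dim_H J\ge t_0$ separately, where $t_0$ denotes the unique root of $t\mapsto\pi_f(-t\Psi)$. First I would check that this root is well defined. Applying the variational principle (Theorem \ref{dl21}) to the sub-additive potential $-t\Psi$ gives
$$
\pi_f(-t\Psi)=\sup_{\mu\in\mathcal{M}(f|_J)}\bigl\{h_\mu(f)-t\lambda(\mu)\bigr\},
$$
where $\lambda(\mu)=\Psi_*(\mu)=\lim_{n}\tfrac1n\int\log m(D_xf^n)\,{\mathrm d}\mu$ is the common Lyapunov exponent supplied by average conformality (Definition \ref{dy21}). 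Since $J$ is a genuine repeller (Remark \ref{zhu1}), $\lambda(\mu)$ is bounded below by a positive constant uniformly in $\mu$; hence $t\mapsto\pi_f(-t\Psi)$ is continuous and strictly decreasing, with value $h_{\mathrm{top}}(f)\ge0$ at $t=0$ and tending to $-\infty$, so $t_0$ exists and is unique.

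The geometric engine of the proof is the uniform asymptotic conformality estimate
$$
\lim_{n\to\infty}\frac1n\sup_{x\in J}\bigl(\log\|D_xf^n\|-\log m(D_xf^n)\bigr)=0,
$$
which I would deduce from average conformality via the Oseledec theorem and compactness of $\mathcal{M}(f|_J)$: a failure along some subsequence would, after passing to a weak$^{*}$ limit, produce an ergodic measure with two distinct Lyapunov exponents, contradicting Definition \ref{dy21}. This forces all singular values of $D_xf^n$ to be comparable on the exponential scale, so that dynamically defined pieces are nearly round, with diameter comparable to both $\|D_xf^n\|^{-1}$ and $m(D_xf^n)^{-1}$; it also reconciles the sub- and sup-additive pressures exactly as in Proposition \ref{jiamt23}, legitimizing the use of $\log m(D_xf^n)$ in the Bowen equation.

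For the upper bound I would use the expanding Markov structure of the repeller to cover $J$, for each large $n$, by its level-$n$ cylinders (images of inverse branches of $f^n$). Bounded distortion together with the rounding estimate shows that a cylinder containing $x$ has diameter comparable to $m(D_xf^n)^{-1}=\exp(-\log m(D_xf^n))$, so for each $t$ the sum $\sum_C(\operatorname{diam}C)^t$ is controlled by the separated-set sums defining $\pi_{f^n}(-t\log m(D_xf^n),n,\e)$, hence asymptotically by $\pi_f(-t\Psi)$. Fixing $t>t_0$ makes this pressure negative, so the sums tend to $0$ and $\mathcal{H}^t(J)=0$; letting $t\downarrow t_0$ yields $\dim_H J\le t_0$. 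For the lower bound I would use the variational principle to pick, for each $\e>0$, an ergodic $\mu$ with $h_\mu(f)-t_0\lambda(\mu)>-\e$, so that $h_\mu(f)/\lambda(\mu)>t_0-\e'$. Applying the Shannon--McMillan--Breiman theorem to control cylinder measures and Birkhoff's theorem to $\log m(D_xf^n)$ to control scales, and using rounding to compare cylinders with metric balls, I would show $\underline{\dim}_{\mathrm{loc}}\mu(x)\ge h_\mu(f)/\lambda(\mu)$ for $\mu$-a.e.\ $x$; the mass distribution principle then gives $\dim_H J\ge\dim_H\mu\ge t_0-\e'$, and $\e\to0$ finishes the argument.

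The step I expect to be the main obstacle is the uniform control of the non-conformal distortion on $J$. Because average conformality only forces the spread of singular values to be subexponential, every rounding and bounded-distortion estimate comes with error factors of the form $e^{o(n)}$, and the delicate point is to propagate these bounds uniformly in $x$ and over all inverse branches, so that they affect neither the covering sums in the upper bound nor the pointwise-dimension computation in the lower bound. Keeping these $o(n)$ errors from contaminating the exponential rates that govern the Hausdorff dimension is precisely where the argument differs from the classical conformal case and where the care is required.
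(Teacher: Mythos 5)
First, a point of reference: this paper does not prove Theorem \ref{dl22} at all --- it is imported verbatim from \cite{ban}, so the comparison must be against that source's argument (pieces of which surface here in Proposition \ref{jiamt23} and Remark \ref{zhu1}). Your outline reconstructs that argument's architecture faithfully: the well-definedness of the root via the sub-additive variational principle and the uniform lower bound $\lambda(\mu)\geq\log\lambda>0$ coming from Remark \ref{zhu1}; the uniform asymptotic conformality estimate $\frac{1}{n}\sup_{x\in J}\bigl(\log\|D_xf^n\|-\log m(D_xf^n)\bigr)\to 0$, which is exactly the key lemma of \cite{ban} and is proved there essentially by your weak$^*$-limit argument (the limit empirical measure need not be ergodic, but sub-additivity lets one pass to an ergodic component with distinct exponents, giving the contradiction); a covering upper bound at scales $m(D_xf^n)^{-1}$ with $e^{o(n)}$ errors; and a lower bound through ergodic measures nearly attaining the variational supremum, Shannon--McMillan--Breiman, and the mass distribution principle. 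Your closing diagnosis --- that the whole difficulty is keeping the $e^{o(n)}$ distortion and rounding errors from contaminating the exponential rates --- is indeed the crux of the proof in \cite{ban}, where $C^1$ regularity forces one to replace classical bounded distortion by the sub-exponential version obtained from uniform continuity of $Df$.

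One step would fail as literally written: the appeal to an ``expanding Markov structure'' with level-$n$ cylinders. Definition \ref{dy21} only asks that $J$ be compact, invariant, and average conformal; Remark \ref{zhu1} upgrades this to uniform expansion, but \emph{not} to local maximality, and a uniformly expanding invariant set that is not locally maximal need not admit a Markov partition. The repair is routine and is what \cite{ban} actually does: since $Df$ is invertible near $J$ and expansion makes $f^n$ injective on sets of diameter of order $\delta\, m(D_xf^n)^{-1}e^{-\epsilon n}$, one covers $J$ by pull-backs of fixed small balls under local inverse branches (equivalently works with Bowen balls $B_n(x,\delta)$), and these covers compare directly with the $(n,\epsilon)$-separated sums defining $\pi_f$ --- no Markov property is used. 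Two smaller points to supply: $\log m(D_xf^n)$ is sup-additive rather than a Birkhoff sum, so the a.e.\ control of scales in the lower bound invokes Kingman's sub-additive ergodic theorem (the rounding lemma then identifies the limit with the common exponent $\lambda(\mu)$); and the passage from invariant to ergodic measures in the variational principle requires ergodic decomposition together with affinity of $\mu\mapsto h_\mu(f)$ and $\mu\mapsto\Psi_*(\mu)$ (or upper semicontinuity of entropy, available here since expansion implies expansiveness). With these standard substitutions, your proposal is a correct account of the proof in \cite{ban}.
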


\section{Random notations}
In this section, we will give some random notions and some
well-known results. Firstly, let $(\Om,\mathcal{W},\mathbb{P})$
and $\vartheta,E,T$ be described in section 1, and let
$\mathcal{M}_{\mathbb{P}}^{1}(E,T)$ denote the space of
$\Theta$-invariant measures with marginal $\mathbb{P}$ on $\Om$ of
the RDS, $\mathcal{E}_{\mathbb{P}}^{1}(E,T)$ denote the subset of
$\mathcal{M}_{\mathbb{P}}^{1}(E,T)$ with ergodic measures of the
RDS.

Let $L_{E}^{1}(\Omega,C(M))$ denote the collection of all
integrable random continuous functions on fibers, i.e. a
measurable $f: E\rightarrow \mathbb{R}$ is a member of
$L_{E}^{1}(\Omega,C(M))$ if $f(\omega): E_{\omega} \rightarrow
\mathbb{R}$ is continuous and $\| f \|_{1}:=\int \| f(\omega) \|
\mathrm{d} \mathbb{P}(\omega)<\infty$, where $\| f(\omega)
\|=\sup_{x\in E_{\omega}} | f(\omega,x)|$. If we identify $f$ and
$g$ provided $\| f-g \|_{1}=0$, then $L_{E}^{1}(\Omega,C(M))$
becomes a Banach space with the norm $\| \cdot \|_{1}$. A family
$\Phi$ = $\{ \varphi_{n} \}$$_{n\geq 1}$ of integrable random
continuous functions on $E$ is called sub-additive if for
${\mathbb{P}}$-almost all $\omega$,
 $$
 \varphi_{n+m}(\omega,x)\leq
\varphi_{n}(\omega,x)+\varphi_{m}(\Theta^{n}(\omega,x))
~{\mathrm{for~all}}~n,m\in \mathbb{N}, x\in E_{\omega}.
$$

Let $\epsilon: \Omega\rightarrow(0,1]$ be a measurable function. A
set $F\subset E_{\omega}$ is said to be
$(\omega,\epsilon,n)$-separated for $T$, if $x,y\in F,x\neq y$
implies $y\notin B_{\omega}(n,x,\epsilon)$, where
$B_{\omega}(n,x,\epsilon):=\{y\in E_{\omega} : d(T(k,\om) x,
T(k,\om) y)< \epsilon (\vartheta^k \omega) ~for~0\leq k\leq n-1
\}$ and $d$ is the given metric on $M$.

Let $\Phi$ = $\{ \varphi_{n} \}$$_{n\geq 1}$ be a sub-additive
function sequence with $\varphi_{n}\in L_{E}^{1}(\Omega,C(M)) $
for each $n$. As usual, we  put
$$
\begin{array}{l}
 {\pi_{T}(\Phi)}(\omega,\epsilon,n)= \sup \{ \sum\limits_{x\in
F} e^{  \varphi_n(\omega,x)}: F \mbox{ is  an }
(\omega,\epsilon,n)\mbox{-separated subset of } E_{\omega}\}\\
{\pi_{T}(\Phi)}(\epsilon)=\limsup\limits_{n\rightarrow
\infty} \frac{1}{n} \int \log{\pi_{T}(\Phi)}(\omega,\epsilon,n)\mathrm{d} \mathbb{P}(\omega)\\
{\pi_{T}(\Phi)}=\lim\limits_{\epsilon
\downarrow0}{\pi_{T}(\Phi)}(\epsilon)
\end{array}
$$
The last quantity  is called the sub-additive topological pressure
of $\Phi$ with respect to $T$. We just mention that the above
definition is reasonable, see \cite{zhao2} for details.

\begin{remark} \label{jiazhu2} \rm
\it (i) If the function sequence $\Phi=\{ \varphi_n\}$ can be
written as $\varphi_n(\omega,
x)=\sum_{i=0}^{n-1}\varphi(\Theta^i(\omega, x))$ for some function
$\varphi\in L_{E}^{1}(\Omega,C(M))$, then  we call
${\pi_{T}(\Phi)}$ the random additive topological pressure, see
\cite{bog,kif1} for details, denote it simply by
${\pi_{T}(\varphi)}$. (ii) Since $\mathbb{P}$ is ergodic in the
model which we consider, so the limits in the above definition
will not change $\mathbb{P}$-almost everywhere without integrating
against $\mathbb{P}$.
\end{remark}

\begin{lemma} \label{yl31} \rm
\it For $i=1,2$, let $X_i$ be compact metric spaces, $E_i$
measurable bundles over $\Om$ with compact fibers in $X_i$, and
$\varphi_i$ topological bundle random dynamical systems on $E_i$.
If $\psi=\{ \psi(\om): E_{\om}^{1}\rightarrow E_{\om}^{2} \}$ is a
family of homeomorphism between $E_{\om}^{1}$  and $E_{\om}^{2}$
satisfying $\varphi_2(\om)\circ \psi(\om)=\psi(\vartheta\om)\circ
\varphi_1(\om),\ \mathbb{P}-a.s.$, and ${\mathcal{F}}=\{
f_n\}_{n\geq 1}$ is a sub-additive potential in $L_{E_2}^{1}(\Om,
C(X_2))$ then
$$
\pi_{\varphi_2}(\mathcal{F})=\pi_{\varphi_1}(\mathcal{F}\circ
\psi)
$$
where $\mathcal{F}\circ \psi =\{ f_n\circ \psi \}_{n\geq 1}$
denotes the member of $L_{E_1}^{1}(\Om, C(X_1))$ defined by
$f_{n}(\om,\psi(\om)x)$ for each $n\geq 1$.
\end{lemma}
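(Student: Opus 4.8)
The plan is to mirror, in the random/fiberwise setting, the argument used for Proposition~\ref{jiamt21}, which treats the deterministic conjugacy case. The conclusion is an \emph{equality} of sub-additive pressures under a fiber-preserving conjugacy $\psi$, so I expect the proof to run in two symmetric halves: first establish the inequality $\pi_{\varphi_2}(\mathcal{F})\le \pi_{\varphi_1}(\mathcal{F}\circ\psi)$, and then obtain the reverse inequality by applying the same argument with the roles of $(\varphi_1,E_1)$ and $(\varphi_2,E_2)$ interchanged and $\psi$ replaced by $\{\psi(\om)^{-1}\}$, which is a conjugacy in the opposite direction since each $\psi(\om)$ is a homeomorphism. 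Before anything else I would verify that $\mathcal{F}\circ\psi$ is genuinely a sub-additive potential in $L_{E_1}^{1}(\Om,C(X_1))$: the sub-additivity inequality
\[
f_{n+m}(\om,\psi(\om)x)\le f_n(\om,\psi(\om)x)+f_m\bigl(\Theta_2^{\,n}(\om,\psi(\om)x)\bigr)
\]
transfers to the $E_1$-fibers precisely because the intertwining relation $\varphi_2(\om)\circ\psi(\om)=\psi(\vartheta\om)\circ\varphi_1(\om)$ gives $\Theta_2^{\,n}(\om,\psi(\om)x)=(\vartheta^n\om,\psi(\vartheta^n\om)\varphi_1(n,\om)x)$, matching the $\Theta_1$-orbit of $(\om,x)$ after applying $\psi$ on fibers. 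Integrability in the $\|\cdot\|_1$ norm is immediate because $\psi(\om)$ maps $E_\om^1$ \emph{onto} $E_\om^2$, so the fiber sup-norm $\|f_n(\om)\circ\psi(\om)\|=\|f_n(\om)\|$ is unchanged.

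The core of the first inequality is a separated-set comparison carried out \emph{fiber by fiber}. Fix a measurable $\epsilon:\Om\to(0,1]$. The main technical point is that $\psi$ is a family of homeomorphisms on compact fibers, so I would use uniform continuity of $\psi(\om)^{-1}$ on each $E_\om^2$ to produce a measurable $\delta:\Om\to(0,1]$ with the property that, whenever $x,y\in E_\om^2$ satisfy $d(x,y)>\epsilon(\om)$, their $\psi(\om)^{-1}$-preimages are separated by more than $\delta(\om)$. Applying this along the orbit and using the intertwining relation, any $(\om,\epsilon,n)$-separated subset $F\subset E_\om^2$ pulls back under $\psi(\om)^{-1}$ to an $(\om,\delta,n)$-separated subset of $E_\om^1$, with the potential values preserved since $f_n(\om,x)=f_n(\om,\psi(\om)\psi(\om)^{-1}x)=(f_n\circ\psi)(\om,\psi(\om)^{-1}x)$. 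This yields the fiberwise bound
\[
\pi_{\varphi_2}(\mathcal{F})(\om,\epsilon,n)\le \pi_{\varphi_1}(\mathcal{F}\circ\psi)(\om,\delta,n),
\]
which I would integrate against $\mathbb{P}$, divide by $n$, take $\limsup_{n\to\infty}$, and finally let $\epsilon\downarrow 0$, noting that $\delta(\om)\to 0$ (one can arrange $\delta\le\epsilon$, so $\epsilon\downarrow0$ forces $\delta\downarrow0$).

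The step I expect to be the genuine obstacle, and the place where this differs from the deterministic Proposition~\ref{jiamt21}, is the \emph{measurability and integrability} of the comparison constant $\delta$ as a function of $\om$. In the deterministic case one simply invokes uniform continuity of a single map $\varphi$ on a single compact space; here the modulus of continuity of $\psi(\om)^{-1}$ varies with $\om$ and must be chosen measurably so that the resulting $\delta$ is an admissible gauge function and so that all the quantities $\pi_{\varphi_i}(\cdot)(\om,\cdot,n)$ remain measurable in $\om$ (this is what makes the integral $\int\log\pi_{\varphi_1}(\mathcal{F}\circ\psi)(\om,\delta,n)\,\mathrm{d}\mathbb{P}(\om)$ well defined). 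I would handle this by a measurable-selection argument, choosing for each $\om$ the supremum of admissible separation constants and verifying its measurability via the measurable dependence of $\psi$ on $\om$ and the fact that $\om\mapsto E_\om^i$ are measurable $\mathcal{K}$-valued multifunctions; the bundle RDS framework of \cite{zhao2} guarantees that the pressure expressions are measurable in $\om$, which is exactly the input that legitimizes integrating the fiberwise inequality. Once this measurability is secured, both inequalities follow and the equality $\pi_{\varphi_2}(\mathcal{F})=\pi_{\varphi_1}(\mathcal{F}\circ\psi)$ is immediate.
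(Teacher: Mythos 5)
Your proof is essentially correct, but it takes a genuinely different route from the paper. You transplant the deterministic separated-set argument of Proposition~\ref{jiamt21} into the fiberwise setting: pull back $(\om,\epsilon,n)$-separated sets under $\psi(\om)^{-1}$, using the intertwining relation $\varphi_1(k,\om)\circ\psi(\om)^{-1}=\psi(\vartheta^k\om)^{-1}\circ\varphi_2(k,\om)$ along the orbit, and then integrate the resulting fiberwise inequality. The paper instead argues entirely at the level of invariant measures: after the same verification that $\mathcal{F}\circ\psi$ is sub-additive, it transports each $\mu\in\mathcal{M}_{\mathbb{P}}^{1}(E_1,\varphi_1)$ to $\mu_\psi\in\mathcal{M}_{\mathbb{P}}^{1}(E_2,\varphi_2)$ via $\psi(\om)_{*}\mu_{\om}$, invokes the invariance of fiber entropy under conjugacy ($h_{\mu}^{(r)}(\varphi_1)=h_{\mu_{\psi}}^{(r)}(\varphi_2)$, Theorem~2.2.2 of \cite{bog2}) together with $\lim_n\frac{1}{n}\int f_n\circ\psi\,{\mathrm{d}}\mu=\lim_n\frac{1}{n}\int f_n\,{\mathrm{d}}\mu_{\psi}$, and concludes both inequalities from the variational principle for random sub-additive pressure in \cite{zhao2}. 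The trade-off: the paper's argument is three lines long and completely sidesteps the issue you correctly identified as the crux of your approach, namely the measurable choice of a random gauge $\delta:\Om\to(0,1]$ dominated by the $\om$-dependent modulus of continuity of $\psi(\om)$; the cost is reliance on two pieces of heavy machinery (the random variational principle and entropy invariance under conjugacy). Your route is more elementary and self-contained at the level of the definitions, and the one-sided inequality would survive if $\psi(\om)$ were merely a continuous surjection, but you must actually carry out the measurable-selection step you sketch (e.g., via Castaing-type dense measurable selections of the compact fibers), and note one small slip: the implication you display, that $d(x,y)>\epsilon(\om)$ forces $d(\psi(\om)^{-1}x,\psi(\om)^{-1}y)>\delta(\om)$, is the contrapositive of uniform continuity of $\psi(\om)$ itself, not of $\psi(\om)^{-1}$; the argument is unaffected since you apply it in exactly that form.
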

\begin{proof}
We first check the new defined potential $\mathcal{F}\circ \psi
=\{ f_n\circ \psi \}_{n\geq 1}$ is indeed sub-additive. Precisely,
we have
$$
\begin{array}{ll}
f_{n+m}\circ \psi (\om,x)&=f_{n+m}(\om,\psi(\om)x) \\
&\leq f_n(\om,\psi(\om)x)+f_m(\vartheta^n\om,\varphi_2(n,\om)\psi(\om)x)\\
&= f_n\circ \psi
(\om,x)+f_m(\vartheta^n\om,\psi(\vartheta^n\om)\varphi_1(n,\om)x)\\
&=f_n\circ \psi (\om,x)+f_m\circ \psi (\vartheta^n\om,
\varphi_1(n,\om)x).
\end{array}
$$

Let $\mu\in \mathcal{M}_{\mathbb{P}}^{1}(E_1,\varphi_1)$ and write
$\mu_{\psi}$ for the member of
$\mathcal{M}_{\mathbb{P}}^{1}(E_2,\varphi_2)$ defined by
$\psi(\om)_{*}\mu_{\om}$. We have
$h_{\mu}^{(r)}(\varphi_1)=h_{\mu_{\psi}}^{(r)}(\varphi_2)$ (see
theorem 2.2.2 in \cite{bog2}) and
$\lim_{n\rightarrow\infty}\frac{1}{n}\int f_n\circ \psi
{\mathrm{d}}\mu=\lim_{n\rightarrow\infty}\frac{1}{n}\int f_n
{\mathrm{d}}\mu_{\psi}$, by the variational principle of random
sub-additive topological pressure in \cite{zhao2} we have
$\pi_{\varphi_1}(\mathcal{F}\circ \psi)\leq
\pi_{\varphi_2}(\mathcal{F})$. By symmetry we get the reverse
inequality and hence the desired result.
\end{proof}

\begin{definition} \label{dy31} \rm
\it Let $T$ be a RDS  over $\vartheta$. A generator of RDS $T$ is
a family ${\mathcal{A}}=\{ {\mathcal{A}}(\omega)=(A_i(\omega)):
{\mathcal{A}}(\omega) \mbox{ is an open cover of } E_\omega \}$
with
\begin{itemize}
  \item[(i)] $\mathcal{A}(\omega)$ is finite for all $\omega\in
  \Omega$;

  \item[(ii)] $\omega\mapsto d(x, A_i(\omega))$ is measurable for all
  $x\in M$ and all $i\in \mathbb{N}$;

  \item[(iiii)] for each sequence $(A_n)_{n\in \mathbb{N}}$ of sets
  we have $A_n \in {\mathcal{A}}(\vartheta^n\omega)$ for all $n\in
  \mathbb{N}$ implies that $\bigcap_{n=0}^{\infty}T(n,\om)^{-1}\bar{A_n}$ contains at
  most one point.
\end{itemize}
\end{definition}

\begin{definition} \label{dy32} \rm
\it Let $T$ be a RDS  over $\vartheta$. We call $T$ is
(positive)expansive if there exists a $(0,1)$-valued random
variable $\Delta$ such that
$$
d(T(n,\om)x, T(n,\om)y)\leq \Delta(\vartheta^n\om)\ for \ all \
n\in\mathbb{N}
$$
implies $x=y$.
\end{definition}

\begin{definition} \label{dy33} \rm
\it A generator ${\mathcal{A}}$ of a given RDS $T$ is called a
strong generator if
$$
\lim_{k\rightarrow\infty} {\mathrm{diam}} \bigvee_{i=0}^{k-1}
T(i,\om)^{-1} {\mathcal{A}}(\vartheta^i\omega)=0~~uniformly\ in\
\omega
$$
An expansive RDS is said to be strongly expansive if it possesses
a strong generator.
\end{definition}

Let $U\subset M$ be an open subset of the Riemannian manifold $M$
with $\bar{U}\in {\mathcal{K}}$ and let $C(U,M)$ denote the space
of all continuous maps from $U$ to $M$ endowed with the compact
open topology.

\begin{definition} \label{dy34} \rm
\it Assume $f\in C(U,M)$ and $J\in \mathcal{K}$ with $fJ=J$. A
family $\{ T_\e \}_{\e >0}$ of $C(U,M)$-valued random variables is
called a random perturbation of $f$ on $J$ if
\begin{itemize}
  \item[(i)] $\lim_{\e\rightarrow 0} T_\e=f$ in probability;

  \item[(ii)] there exists a family of $\mathcal{K}$-valued random
  variables $\{ J_\e \}_{\e >0}$ such that
  \begin{itemize}
    \item[(a)] for each $\e >0$ we have that $\mathbb{P}$-a.s.
    $T_{\e}(\om)J_{\e}(\om)=J_{\e}(\vartheta\om)$;

    \item[(b)] $\lim_{\e\rightarrow 0} J_\e= J$ in probability.
  \end{itemize}
  \end{itemize}
$\{ T_\e\}_{\e>0}$ is said to be structurally stable if there
exists a family $\{ h_{\e}\}_{\e>0}$ of $C(J,M)$-valued random
variables such that
\begin{itemize}
  \item[(iii)]  for each $\e>0$ we have that  $h_{\e}(\om):J\rightarrow J_{\e}(\om)$ is a
  homeomorphism and $T_{\e}(\om)\circ h_{\e}(\om)=h_{\e}(\vartheta\om)\circ
  f$ $\mathbb{P}$-a.s.;

  \item[(iv)] $\lim_{\e\rightarrow 0} h_{\e} =id$ in probability.
\end{itemize}
\end{definition}

See \cite{bog} for examples of strongly expansive bundle RDS and
structurally stable random perturbation. Put $X=\overline{U}$ and
let $C^{\a}(X,\mathbb{R})$ denote the space of all H\"{o}lder
continuous functions on $X$ with H\"{o}lder exponent $\a$. We
endow $C^{\a}(X,\mathbb{R})$ with the usual norm $|| \cdot
||_{\a}:=||\cdot ||+|\cdot|_{\a}$, where $||\cdot||$ is the
sup-norm and $|\cdot|_{\a}$ is the least H\"{o}lder constant,
namely, $| \varphi |_{\a}:=\sup_{x,y\in X,x\neq y}
\frac{|\varphi(x) -\varphi(y)|}{d(x,y)^ \a}$. Then using
proposition \ref{jiamt22} we can get the following important
proposition which is proved in \cite{bog}, we cite here just for
complete.

\begin{proposition} \label{mt31} \rm
\it Let $\{ T_{\e}\}_{\e >0}$ be a structurally stable random
perturbation of $f$ on $J$. And let $\{\varphi_{\e}\}_{\e>0}$ be a
family of $C^{\a}(X,\mathbb{R})$-valued  random variables
satisfying
\begin{eqnarray*}
\lim_{\e\rightarrow 0} || \varphi_{\e}-\phi||_{\a} =0\ in\
L^1(\mathbb{P})
\end{eqnarray*}
for some  $\phi\in C^{\a}(X,\mathbb{R})$. Then
$$
\lim_{\e\rightarrow 0}\pi_{T_{\e}}(\varphi_{\e})=\pi_{f}(\phi).
$$
\end{proposition}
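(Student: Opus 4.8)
The plan is to transport the random pressure of the perturbed cocycle $T_\e$ back onto the fixed map $f$ by means of the structural conjugacy, and then to exploit a Lipschitz dependence of the random pressure on its potential. By Definition \ref{dy34}(iii), structural stability furnishes for each $\e>0$ a family of fibrewise homeomorphisms $h_\e(\om)\colon J\to J_\e(\om)$ satisfying $T_\e(\om)\circ h_\e(\om)=h_\e(\vartheta\om)\circ f$ $\mathbb{P}$-a.s. Regarding $f$ as the constant bundle RDS on the trivial bundle $\Om\times J$ and $T_\e$ as the bundle RDS on $J_\e$, this is exactly the intertwining hypothesis of Lemma \ref{yl31} with $\psi=h_\e$. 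Since the additive potential generated by $\varphi_\e$ is in particular sub-additive, Lemma \ref{yl31} gives
$$
\pi_{T_\e}(\varphi_\e)=\pi_f(\varphi_\e\circ h_\e),
$$
where $\varphi_\e\circ h_\e$ denotes the random potential $(\om,x)\mapsto\varphi_\e(\om,h_\e(\om)x)$ on $J$. It thus suffices to prove $\pi_f(\varphi_\e\circ h_\e)\to\pi_f(\phi)$ as $\e\to0$.

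Next I would establish, as the random analogue of the continuity in Proposition \ref{jiamt22}, that for the fixed map $f$ the random pressure is $1$-Lipschitz in the fibre $L^1$-norm of the potential. Given two random additive potentials $g,g'$, on any $(\om,\e,n)$-separated set $F$ one has $\sum_{x\in F}e^{g_n(\om,x)}\le\exp\bigl(\sum_{i=0}^{n-1}\|g(\vartheta^i\om)-g'(\vartheta^i\om)\|\bigr)\sum_{x\in F}e^{g'_n(\om,x)}$; taking the supremum over $F$, then logarithms, integrating against $\mathbb{P}$, and using the $\vartheta$-invariance of $\mathbb{P}$ yields $\frac1n\int\log\pi_f(g)(\om,\e,n)\,\mathrm{d}\mathbb{P}\le\|g-g'\|_1+\frac1n\int\log\pi_f(g')(\om,\e,n)\,\mathrm{d}\mathbb{P}$. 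Passing to the $\limsup$ in $n$, letting $\e\downarrow0$, and exchanging the roles of $g$ and $g'$ gives $|\pi_f(g)-\pi_f(g')|\le\|g-g'\|_1$. When the potential is deterministic, say $g'=\phi$, both the base dynamics and the potential are deterministic, so $\pi_f(\phi)$ coincides with the classical topological pressure; this is the point at which the deterministic theory behind Proposition \ref{jiamt22} enters.

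It then remains to prove $\|\varphi_\e\circ h_\e-\phi\|_1\to0$. I would split $|\varphi_\e(\om,h_\e(\om)x)-\phi(x)|\le\|\varphi_\e(\om)-\phi\|+|\phi(h_\e(\om)x)-\phi(x)|$. The first term integrates to at most $\int\|\varphi_\e(\om)-\phi\|\,\mathrm{d}\mathbb{P}\le\int\|\varphi_\e(\om)-\phi\|_\a\,\mathrm{d}\mathbb{P}$, which tends to $0$ by hypothesis. For the second, H\"older continuity gives $\sup_{x\in J}|\phi(h_\e(\om)x)-\phi(x)|\le|\phi|_\a\bigl(\sup_{x\in J}d(h_\e(\om)x,x)\bigr)^\a$; since $h_\e\to\mathrm{id}$ in probability this tends to $0$ in probability, and being bounded by the constant $|\phi|_\a(\mathrm{diam}\,X)^\a$ it also converges in $L^1$ by bounded convergence. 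Combining the two estimates gives $\|\varphi_\e\circ h_\e-\phi\|_1\to0$, whence by the Lipschitz bound $|\pi_{T_\e}(\varphi_\e)-\pi_f(\phi)|=|\pi_f(\varphi_\e\circ h_\e)-\pi_f(\phi)|\le\|\varphi_\e\circ h_\e-\phi\|_1\to0$.

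The main obstacle is the mismatch between the two modes of convergence: $\varphi_\e\to\phi$ holds in $L^1(\mathbb{P})$, whereas $h_\e\to\mathrm{id}$ holds only in probability. The crux is therefore to upgrade the in-probability convergence of the random conjugacies to genuine $L^1$-control of $\varphi_\e\circ h_\e-\phi$, and it is precisely here that the H\"older regularity of $\phi$ (to absorb the displacement $d(h_\e(\om)x,x)$) together with the compactness of $X$ (to apply bounded convergence) are decisive. A secondary technical point is the careful treatment of the random separation radius $\e(\om)$ in the Lipschitz estimate, but this is routine once the $\vartheta$-invariance of $\mathbb{P}$ is invoked.
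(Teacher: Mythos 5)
Your proof is correct, and it is essentially the argument the paper has in mind: the paper itself gives no proof of Proposition \ref{mt31}, deferring to Bogensch\"utz--Ochs \cite{bog}, and your route --- transporting the pressure through the conjugacies $h_\epsilon$ (the additive specialization of Lemma \ref{yl31}), proving the $1$-Lipschitz bound $|\pi_f(g)-\pi_f(g')|\leq\|g-g'\|_1$ via separated sets and $\vartheta$-invariance, and then controlling $\|\varphi_\epsilon\circ h_\epsilon-\phi\|_1$ by splitting off the displacement term and absorbing it with the H\"older seminorm $|\phi|_\alpha$ plus bounded convergence --- is exactly the mechanism of that citation. The one cosmetic difference is that you prove a quantitative Lipschitz estimate directly in the random setting instead of routing through the deterministic continuity statement of Proposition \ref{jiamt22} as the paper suggests; this makes your argument self-contained and is if anything cleaner.
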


\section{Stability of the Hausdorff dimension under random perturbations}
In this section we will prove that the Hausdorff dimension of the
average conformal repeller is stable under suitable random
perturbation.

The following proposition can be proved by slightly modification
of the proof of theorem 1.1 in \cite{liu}.

\begin{proposition} \label{mt41} \rm
\it Assume $J$ is an average conformal repeller for a $C^{1+\a}$
map $f: U\rightarrow M$. There exists a $C^1$ neighborhood
$\mathcal{U}(f)\subset C^{1+\a}(U,M)$ of $f$ such that the
following holds:

\noindent(i) For every random variable $T:\Om \rightarrow
\mathcal{U}(f)$ there exists a $\mathcal{K}$-valued random
variable $J(\om)\subset U$ satisfying  $T(\om)J(\om)
=J(\vartheta\om) $, and a $C^0(U,M)$-valued random variable $h$
such that each $h(\om)$ is a homeomorphism between $J$ and
$J(\om)$ and $T(\om)\circ h(\om)=h(\vartheta\om)\circ f$ on $J$.

\noindent (ii) If $\{ T_{\e} :\Om \rightarrow \mathcal{U}(f)
\}_{\e>0}$ is a family of random variables with
$\lim_{\e\rightarrow 0} T_{\e} =f$ in probability(with respect to
the $C^1$ distance), then $\lim_{\e\rightarrow 0} h_{\e}
=\mathrm{id}$ in probability and thus $\lim_{\e\rightarrow 0} J_\e
=J$ in probability. Here $h_\e$ and $J_\e$ are the corresponding
objects associated to $T_\e$ by (i).\\
In other words, each family $\{ T_\e \}_{\e >0}$ of
$\mathcal{U}(f)$-valued random variables with $\lim_{\e\rightarrow
0} T_\e=f$ in probability is a structurally stable random
perturbation of $f$ on $J$.
\end{proposition}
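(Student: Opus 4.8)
The plan is to reduce the statement to the classical structural stability of uniformly expanding repellers and then transcribe Liu's fixed-point argument \cite{liu} into the bundle (random) setting; the only genuinely new input is the observation that an average conformal repeller is uniformly expanding. First I would invoke Remark \ref{zhu1}: since $J$ is average conformal, there are $\lambda>1$ and $C>0$ with $\|D_xf^n v\|\ge C\lambda^n\|v\|$ for all $x\in J$, $v\in T_xM$, $n\ge1$. Passing to an adapted (Lyapunov) metric on a neighborhood of $J$ I may assume one-step expansion $\|D_xf\,v\|\ge\tilde\lambda\|v\|$ with $\tilde\lambda>1$, so $f$ has well-defined inverse branches on $J$ that contract by $\tilde\lambda^{-1}<1$. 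Shrinking the $C^1$-neighborhood $\mathcal{U}(f)$, every $T(\omega)$ is likewise expanding and has, near each $x\in J$, an inverse branch $S_{\omega,x}:=(T(\omega))^{-1}$ defined on a neighborhood of $f(x)$ and contracting by a rate $<1$ uniform in $\omega$ and $x$.

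Next I would construct the conjugacy as a fixed point. The equation $T(\omega)\circ h(\omega)=h(\vartheta\omega)\circ f$ on $J$ is, after applying the inverse branch near $x$, equivalent to $h(\omega)(x)=S_{\omega,x}\big(h(\vartheta\omega)(f(x))\big)$. I define the graph-transform operator $\Gamma$ on the complete metric space of measurable families $h=\{h(\omega)\}$ with $h(\omega)\in C(J,M)$ and $\sup_\omega\sup_{x\in J}d(h(\omega)(x),x)\le r$ (sup norm over fibers and over $\omega$) by $(\Gamma h)(\omega)(x):=S_{\omega,x}(h(\vartheta\omega)(f(x)))$. Because the branches contract uniformly, $\Gamma$ is a $\tilde\lambda^{-1}$-contraction, and for $r$ small and $\mathcal{U}(f)$ narrow it maps the ball into itself; the Banach fixed point $h$ is the desired conjugacy, with $h=\mathrm{id}$ when $T\equiv f$. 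Continuity of each $h(\omega)$ follows from uniform convergence of $\Gamma^n(\mathrm{id})$. Injectivity of $h(\omega)$ I would get from positive expansiveness of the expanding map $f$: if $h(\omega)(x)=h(\omega)(x')$, iterating the conjugacy gives $h(\vartheta^n\omega)(f^n x)=h(\vartheta^n\omega)(f^n x')$, whence $d(f^nx,f^nx')\le 2r$ for all $n\ge0$, forcing $x=x'$ once $2r$ is below the expansiveness constant. Setting $J(\omega):=h(\omega)(J)$, each $h(\omega):J\to J(\omega)$ is then a continuous bijection of compacta, hence a homeomorphism, and $T(\omega)J(\omega)=h(\vartheta\omega)f(J)=J(\vartheta\omega)$ by the conjugacy equation together with $fJ=J$.

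The part I expect to be most delicate is the random/measurable bookkeeping rather than the dynamics. One must select the inverse branches $S_{\omega,x}$ measurably in $\omega$ and check that $\Gamma$ preserves measurability; since $\vartheta$ is measurable and $T$ is a random variable, each iterate $\Gamma^n(\mathrm{id})$ is a measurable $C(J,M)$-valued map (note that $\Gamma$ couples the fibers $\omega$ and $\vartheta\omega$), and the uniform limit $h$ inherits measurability, as does $\omega\mapsto J(\omega)=h(\omega)(J)\in\mathcal{K}$. For (ii), writing $h_\e$ for the fixed point of the operator $\Gamma_\e$ built from $T_\e$ and using that $\mathrm{id}$ is the fixed point for $f$, the contraction estimate gives $\|h_\e(\omega)-\mathrm{id}\|_{C^0(J)}\le(1-\tilde\lambda^{-1})^{-1}\,\|\Gamma_\e(\mathrm{id})(\omega)-\mathrm{id}\|$, and $\|\Gamma_\e(\mathrm{id})(\omega)-\mathrm{id}\|$ is bounded by a constant times the $C^0$- (hence $C^1$-) distance between $T_\e(\omega)$ and $f$, because inverse branches of $C^1$-close expanding maps are $C^0$-close. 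Since $T_\e\to f$ in probability with respect to the $C^1$ distance, the right-hand side tends to $0$ in probability, so $h_\e\to\mathrm{id}$ and therefore $J_\e=h_\e(J)\to J$ in probability, which is exactly the asserted structural stability.
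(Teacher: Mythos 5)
Your proposal takes essentially the same route as the paper: the paper offers no self-contained proof of Proposition \ref{mt41}, stating only that it follows by ``slight modification'' of Theorem 1.1 of Liu \cite{liu}, and Liu's proof is precisely the inverse-branch/graph-transform fixed-point construction of the conjugacy that you transcribe to the bundle setting. Your identified ``only genuinely new input'' --- that Remark \ref{zhu1} upgrades average conformality to uniform expansion, so the structural-stability machinery for expanding repellers applies --- is exactly the modification the paper has in mind (compare Step 3 of the proof of Theorem \ref{dl41}, where the same remark is invoked for the same purpose).

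One step in your part (ii) is misstated, though fixably so. Because $\Gamma_\e$ couples the fiber over $\om$ to the fiber over $\vartheta\om$, it is a contraction only in the sup-over-$\om$ metric, and the fiberwise inequality $\|h_\e(\om)-\mathrm{id}\|\le(1-\tilde\lambda^{-1})^{-1}\|\Gamma_\e(\mathrm{id})(\om)-\mathrm{id}\|$ does not follow from the contraction principle. Writing $a_\e(\om):=\|\Gamma_\e(\mathrm{id})(\om)-\mathrm{id}\|$, iteration of the fiberwise contraction estimate gives instead
\begin{equation*}
\|h_\e(\om)-\mathrm{id}\|\ \le\ \sum_{k\ge 0}\tilde\lambda^{-k}\,a_\e(\vartheta^k\om),
\end{equation*}
a weighted sum along the forward $\vartheta$-orbit; when $T_\e\to f$ only in probability (not uniformly in $\om$), your per-$\om$ bound can fail on the exceptional sets. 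The conclusion survives with a one-line repair: after shrinking $\mathcal{U}(f)$, $a_\e$ is bounded by a constant $C$ uniformly in $\om$ and $\e$, and since $\vartheta$ preserves $\mathbb{P}$ one gets $\mathbb{E}\,\|h_\e(\cdot)-\mathrm{id}\|\le(1-\tilde\lambda^{-1})^{-1}\,\mathbb{E}\,a_\e\to 0$ by bounded convergence ($a_\e\to 0$ in probability, $a_\e\le C$), so $h_\e\to\mathrm{id}$ in $L^1$ and hence in probability, as required. With that correction, and the measurability bookkeeping you already sketch, the argument is sound.
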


Now we state and prove our main result.

\begin{theorem} \label{dl41} \rm
\it Assume $J$ is an average conformal repeller for a $C^{1+\a}$
map $f: U\rightarrow M$. There exists a $C^1$ neighborhood
$\mathcal{U}(f)\subset C^{1+\a}(U,M)$ of $f$ such that the
assertions of proposition \ref{mt41} hold with the following
additional property.

If $\{ T_\e : \Om \rightarrow \mathcal{U}(f)\}_{\e
>0} $ is a random perturbation of $f$ with
$$
\lim_{\e \rightarrow 0} T_\e =f\ in\ L^1(\Om,C^{1+\a} (U,M))
$$
then
$$
\lim_{\e\rightarrow 0} \dim_{H}(J_{\e}(\om))= \dim_{H}(J)\quad
\mathbb{P}\mbox{-a.s.}
$$
where $\dim_{H}(\cdot)$ denote the Hausdorff dimension of a set.
Moreover, if $L\subset J$ is compact and $f$-invariant, then
$\lim_{\e\rightarrow 0} \dim_{H}(h_\e(\om)L)=\dim_{H}(L)\
\mathbb{P}$-a.s.
\end{theorem}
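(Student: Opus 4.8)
The plan is to recast both Hausdorff dimensions as roots of Bowen-type equations for (sub-additive) topological pressure and then to show that these roots vary continuously as $\e\to 0$. On the deterministic side, Theorem \ref{dl22} gives that $\dim_H(J)=t_0$ is the unique zero of $P(t):=\pi_f(-t\Psi)$ with $\Psi=\{\log m(D_x f^n)\}_{n\ge 1}$. On the random side, the fibre dimension $\dim_H(J_\e(\om))$ is $\mathbb P$-a.s.\ constant and is squeezed between the zeros of the sub-additive pressure $P_\e^{\mathrm{sub}}(t):=\pi_{T_\e}(\{-t\log m(D_x T_\e(n,\om))\}_n)$ and of the sup-additive pressure $P_\e^{\mathrm{sup}}(t):=\pi_{T_\e}(\{-t\log\|D_x T_\e(n,\om)\|\}_n)$; this uses the random Bowen equation for sub-additive pressure from \cite{zhao1,zhao2} together with $m(A)\le\|A\|$. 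Since $\Psi$ is bounded below by Remark \ref{zhu1} (uniform expansion), $P$ and each $P_\e^{\mathrm{sub}},P_\e^{\mathrm{sup}}$ are continuous and strictly decreasing in $t$, so it suffices to prove that $P_\e^{\mathrm{sub}}(t)$ and $P_\e^{\mathrm{sup}}(t)$ both converge to $P(t)$ as $\e\to 0$, locally uniformly in $t$.

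The engine is the reduction to a single H\"older potential on an iterate, where Proposition \ref{mt31} applies. Because $J$ is average conformal, Proposition \ref{jiamt23} (applied to the $t$-scaled potential) yields
$$ P(t)=\lim_{n\to\infty}\frac1n\,\pi_{f^n}\!\big(-t\log\|D_x f^n\|\big)=\lim_{n\to\infty}\frac1n\,\pi_{f^n}\!\big(-t\log m(D_x f^n)\big), $$
and the same identity for the perturbed cocycle reads $P_\e^{\mathrm{sup}}(t)=\lim_n\frac1n\pi_{T_\e(n,\cdot)}(-t\log\|D_xT_\e(n,\om)\|)$, with the conorm version for $P_\e^{\mathrm{sub}}$. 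For each fixed $n$ the iterated family $\{T_\e(n,\cdot)\}$ is a structurally stable random perturbation of $f^n$ over $\vartheta^n$ with the \emph{same} conjugacy $h_\e$, since $T_\e(n,\om)\circ h_\e(\om)=h_\e(\vartheta^n\om)\circ f^n$ follows by iterating property (iii) of Definition \ref{dy34}. Moreover $x\mapsto-t\log\|D_x f^n\|$ and $x\mapsto-t\log m(D_x f^n)$ are H\"older because $f\in C^{1+\a}$ and $D_xf^n$ is invertible and bounded away from degeneracy on the repeller, and $T_\e\to f$ in $L^1(\Om,C^{1+\a})$ forces $-t\log\|D_xT_\e(n,\om)\|\to-t\log\|D_xf^n\|$ in $L^1(\Om,C^\a)$, and likewise for the conorm. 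Proposition \ref{mt31} then gives, for every fixed $n$,
$$ \lim_{\e\to0}\frac1n\,\pi_{T_\e(n,\cdot)}\!\big(-t\log\|D_xT_\e(n,\om)\|\big)=\frac1n\,\pi_{f^n}\!\big(-t\log\|D_xf^n\|\big), $$
and the analogous statement with the conorm.

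The main obstacle is the exchange of the two limits $\e\to0$ and $n\to\infty$, since Proposition \ref{mt31} is only a fixed-scale statement. The key will be a \emph{uniform conformal-defect estimate}: for every $\d>0$ there are $N$ and $\e_0>0$ with
$$ \frac1n\,\sup_{x\in J_\e(\om)}\log\frac{\|D_x T_\e(n,\om)\|}{m(D_x T_\e(n,\om))}<\d\quad\text{for all } n\ge N,\ 0<\e<\e_0,\ \mathbb P\text{-a.e. }\om. $$
For $\e=0$ this is precisely the average conformal property of $f$ in the uniform form established in \cite{ban}; the extension to small $\e$ follows from the $C^1$-closeness of $T_\e$ to $f$ together with the structural stability of Proposition \ref{mt41}, because the defect is a continuous function of the derivatives read along orbits and the $T_\e$-orbits in $J_\e(\om)$ shadow the $f$-orbits in $J$ via $h_\e\to\mathrm{id}$. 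Granting this, the pointwise bound $|\pi(\varphi)-\pi(\psi)|\le\|\varphi-\psi\|$ shows that at every scale $n\ge N$ the two quantities $\frac1n\pi_{T_\e(n,\cdot)}(-t\log\|\cdot\|)$ and $\frac1n\pi_{T_\e(n,\cdot)}(-t\log m(\cdot))$ differ by at most $t\d$ uniformly in $\e<\e_0$, so $P_\e^{\mathrm{sup}}(t)$ and $P_\e^{\mathrm{sub}}(t)$ lie within $O(\d)$ of the common scale-$N$ value; combining this with the fixed-scale convergence above and the choice of $N$ making $\frac1N\pi_{f^N}(\cdots)$ within $\d$ of $P(t)$ gives $\limsup_{\e\to0}|P_\e^{\mathrm{sub}}(t)-P(t)|\le C\d$, and the same for $P_\e^{\mathrm{sup}}$. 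As $\d$ is arbitrary, both converge to $P(t)$ locally uniformly, so their zeros converge to $t_0$ by strict monotonicity, yielding $\dim_H(J_\e(\om))\to\dim_H(J)$ $\mathbb P$-a.s. Finally, for compact $f$-invariant $L\subset J$ the set $h_\e(\om)L$ is carried by the conjugacy of Proposition \ref{mt41}, and $L$ is itself average conformal since its ergodic measures form a subfamily of those on $J$; applying Lemma \ref{yl31} to identify the random pressure on $h_\e(\om)L$ with that of the pulled-back potential on $L$, the identical argument gives $\dim_H(h_\e(\om)L)\to\dim_H(L)$ $\mathbb P$-a.s.
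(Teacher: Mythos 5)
There is a genuine gap, and it sits exactly at the point you flag as ``the main obstacle.'' Your argument sandwiches $\dim_H(J_\e(\om))$ between the zeros of the \emph{limiting} ($n\to\infty$) sub- and sup-additive random pressures $P_\e^{\mathrm{sub}}$, $P_\e^{\mathrm{sup}}$, which then forces you to interchange $\e\to 0$ and $n\to\infty$; to do this you posit the uniform conformal-defect estimate
$\frac1n\sup_{x\in J_\e(\om)}\log\bigl(\|D_xT_\e(n,\om)\|/m(D_xT_\e(n,\om))\bigr)<\d$ for all $n\ge N$ and all $\e<\e_0$ simultaneously. This is asserted, not proved, and the justification you give (the defect is continuous in the derivatives read along orbits, and $T_\e$-orbits shadow $f$-orbits via $h_\e\to\mathrm{id}$) is insufficient: continuity per scale only yields an $\e_0$ depending on $n$ --- which is nothing more than the fixed-scale convergence you already have from Proposition \ref{mt31} --- and cannot be upgraded to uniformity in $n$, because the scale-$n$ defect is a function of a product of $n$ matrices whose per-factor perturbations compound, and because average conformality of $f$ is an asymptotic cancellation property of the cocycle (the individual factors $\|D_xf\|/m(D_xf)$ need not be close to $1$), not a per-factor property that transfers to nearby cocycles by continuity. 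Proving your estimate would essentially amount to showing the perturbed RDS is uniformly average conformal, uniformly in small $\e$ --- a substantive result in its own right, not a detail. Relatedly, your appeal to \cite{zhao1} for the sandwich presumes the perturbed repeller is a \emph{random average conformal} repeller, which is not known here.

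The paper's proof avoids this interchange altogether, and the difference in quantifier order is the whole point. After verifying the H\"older distortion inequality, uniform expansion, and strong expansiveness (Steps 1--3, which your proposal also omits but which are needed to invoke the dimension bounds at all), it applies Corollary 3.5 of \cite{bog} not only to $T_\e$ but to every iterate $T_\e(n,\cdot)$, obtaining the sandwich $t_n^\e\le \dim_H(h_\e(\om)L)\le s_n^\e$ with $t_n^\e,s_n^\e$ the zeros of the \emph{additive} scale-$n$ pressures, valid at every fixed $n$. One may then first choose a single scale $N$ using only the deterministic limits $\frac1n\pi_{f^n|_L}(-t\log\|D_xf^n\|),\ \frac1n\pi_{f^n|_L}(-t\log m(D_xf^n))\to\pi_{f|_L}(-t\Psi)$ (Proposition \ref{jiamt23} together with Proposition 2.2 of \cite{cao1}), so that both scale-$N$ pressures at $t_0\pm\d$ have strict signs, and only afterwards choose $\e_0$ by the fixed-scale convergence of Proposition \ref{mt31}. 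The random limiting pressures $P_\e^{\mathrm{sub}},P_\e^{\mathrm{sup}}$ never appear, and no uniform-in-$\e$ estimate over all large $n$ is needed. If you replace your limiting-pressure sandwich by the per-scale sandwich and reorder the choices ($N$ first, then $\e_0$), your argument becomes the paper's; as written, it rests on an unproved and nontrivial claim.
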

\begin{proof}
 In the following we will follow Bogensch\"{u}tz and Ochs' proof \cite{bog} to obtain the desired result.
  Choose an open neighborhood $V$ of $J$ such that $\overline{V}$
 is a compact subset of $U$. Then $\mathcal{U}(f)$ can be chosen
 in such a way that $J_{\e}(\om)\subset \overline{V}$ for every $\e
 >0, \om \in \Om$.

 Fix $\e >0$. For $(\om,x)\in \Om\times \overline{V}$ set
 \begin{eqnarray*}
 \eta_{\e}(\om,x):=|| D_xT_{\e}(\om)|| \ and \
 \lambda_{\e}(\om,x):= m(D_xT_{\e}(\om)).
 \end{eqnarray*}
 By taking appropriate $\mathcal{U}(f)$, we can assume that $\log \lambda_{\e},\log \eta_{\e}\in
 L_{J_{\e}}^{1}(\Om,C(M))$.

 For the clarity of the proof, we divide the proof into several
 steps.

\textbf{Step 1}: We claim that $T_{\e}$ satisfies the following
formula
\begin{equation}\label{ds41}
\lambda_{\e}(\om,x)-K(\om)d(x,y)^\alpha \leq
\frac{d(T_{\e}(\om)x,T_{\e}(\om)y)}{d(x,y)} \leq
\eta_{\e}(\om,x)+K(\om)d(x,y)^\alpha
\end{equation}
 for every $\om\in\Om$ and $x\neq y\in J_{\e}(\om)$, where $K:\Om \rightarrow
 \mathbb{R}_{+}$ with $\log^{+}K\in L^{1}(\mathbb{P})$. We will
 prove the inequality (\ref{ds41}) in the rest of this step.

 Choose $r_0>0$ such that $A:=\{ x: dist(x,\overline{V})\leq r_0\} \subset
 U$. Define
 $$
 K_0(\om)= |DT_{\e}(\om)|_{\a,A}=\sup \left\{ \frac{\| D_xT_{\e}(\om)
 -D_yT_{\e}(\om)\|}{d(x,y)^{\a}}:x,y\in A, x\neq y\right\}.
 $$
 For the simplicity of notations, we restrict $M$ to be the case
 of an open subset of $\mathbb{R}^{d}$, since the general case can
 be done via local coordinates. We let $|\cdot|$ denote the
 Euclidian norm on $\mathbb{R}^{d}$ and write $T$ instead of
 $T_{\e}(\om)$ for convenience.

 For $x,y\in J_{\e}(\om)$ with $0<|x-y|<r_0$, we put
 $e:=\frac{y-x}{|y-x|}$ and get that
 \begin{eqnarray*}
 |T(x)-T(y)|&=&\left|\int_{0}^{|y-x|}D_{x+te}T (e)\ {\mathrm{d}}t\right|\leq
 \int_{0}^{|y-x|} \| D_{x+te}T\|{\mathrm{d}}t\\
 &\leq&|y-x|\sup \{ \| D_{x+z}T\|:|z|\leq |y-x|\}\\
 &\leq& |y-x| (\| D_xT\|+K_0(\om)|y-x|^{\a}).
 \end{eqnarray*}
 Thus, we get that
 $$
 \frac{|T(x)-T(y)|}{|y-x|}\leq
 \eta_{\e}(\om,x)+K_0(\om)|y-x|^{\a}.
 $$
On the other hand, we can get that
$$
 \begin{array}{ll}
 \displaystyle\frac{1}{|y-x|}& \left|\int_{0}^{|y-x|}D_{x+te}T (e)\ {\mathrm{d}}t\right| \\
 &\geq\inf  \{ |Ae|: A\in \mbox{convex hull of } D_{x+te}T,0\leq t\leq |y-x|\}\\
 &\geq |D_xT (e)| -\sup  \{ |Ae|: A\in \mbox{convex hull of } (D_{x+te}T-D_xT),\\
&\quad0\leq t\leq |y-x|\}\\
&\geq \lambda_{\e}(\om,x)-K_0(\om)|y-x|^{\a}.
 \end{array}
$$
 The last inequality follows from the definition of $\lambda_{\e}$
 immediately.

 Put
 $$
 K(\om)=\max \left\{ K_{0}(\om), \frac{\mathrm{diam}\overline{V}}{r_0},
 \frac{\max \{ ||
 D_xT_{\e}(\om)||:x\in\overline{V}\}}{r_{0}^{\a}}\right\}
 $$
 and then the  inequality (\ref{ds41}) immediately follows.

\textbf{Step 2}: We claim that
 $$
 \int \log \Lambda_{n}^{\e} {\mathrm{d}}{\mathbb{P}}>0
 $$
for some $n\geq 1$, where $\Lambda_{n}^{\e}(\om)=\min_{x\in
J_{\e}(\om)}\prod_{k=0}^{n-1}\lambda_{\e}(\vartheta^k\om,T_{\e}(k,\om)x)$.

Recall that $J$ is an average conformal repeller for $f$. It is
easy to see that $\lim\limits_{\e\rightarrow 0} \log
\Lambda_{n}^{\e}=\min_{x\in J} \{\log m(D_{f^{n-1}x}f)+\cdots+\log
m(D_xf)\}>0$ in probability. By making $\mathcal{U}(f)$ smaller if
necessary we have that $|\log(|D_xT e|)|$ is uniformly bounded for
all $T\in \mathcal{U}(f),x\in \overline{V}$, and $e\in T_xM$ with
$|e|=1$. Thus $\lim\limits_{\e\rightarrow 0} \log
\Lambda_{n}^{\e}=\min\limits_{x\in J} \{\log
m(D_{f^{n-1}x}f)+\cdots+\log m(D_xf)\}>0$ also in
$L^1(\mathbb{P})$, which implies $\sup_{n\geq 1} \frac{1}{n} \int
\log \Lambda_{n}^{\e} {\mathrm{d}}{\mathbb{P}}>0$ for sufficiently
small $\e$. This finishes the proof of the claim.

\textbf{Step 3}: We claim that $T_\e$ is strongly expansive. By
remark \ref{zhu1} we know $f$ is expanding on $J$, then  there
exists a neighborhood $V$ of $J$, a constant $c>0$, and an integer
$n\geq 1$ such that $|D_xf^n (e)|\geq 1+c$ for every $x\in V$ and
$e\in T_xM$ with $|e|=1$. We can choose $\mathcal{U}(f)$ in such a
way that $|D_x(T_n\circ\cdots \circ T_1) (e)|\geq 1+\frac{c}{2}$
whenever $T_1, \ldots, T_n\in \mathcal{U}(f),x\in V$, and $e\in
T_xM$ with $|e|=1$, and that $J_{\e}(\om)\subset V$ for every $\e
>0$ and $\om \in \Om$. Then $T_\e$ is uniformly expanding and thus
strongly expanding on the bundle $\{ J_\e (\om)\}_{\om\in\Om}$.

\textbf{Step 4}: Let $L\subset J$ be a compact subset with $fL=L$.
We apply corollary 3.5 in \cite{bog} to the bundle RDS $T_\e$ on
$J_\e=\{ (\om,x):x\in h_{\e}(\om)L\}$. Let $\pi_\e$ denote the
pressure functional of $T_\e$ restricted to $J_\e$, then we can
get that there exist $s_{1}^{\e}\geq t_{1}^{\e}\geq 0$ such that
$$
\pi_{\e}(-t_{1}^{\e}\log \eta_{\e})=0=\pi_{\e}(-s_{1}^{\e}\log
\lambda_{\e})
$$
and
$$
 t_{1}^{\e}\leq \dim_{H}(h_{\e}(\om)L)\leq
 s_{1}^{\e}\quad\mathbb{P}\mbox{-a.s.}
$$
If we consider the system $T_{\e}(n,\om)$ and $\log \|
D_xT_{\e}(n,\om)\|,\   \log m(D_xT_{\e}(n,\om))$ for every $n>0$,
and let $\pi_{n,\e}$ denote the pressure functional of
$T_\e(n,\om)$ restricted to $J_\e$, then  we can get that
$$
 t_{n}^{\e}\leq \dim_{H}(h_{\e}(\om)L)\leq
 s_{n}^{\e} \quad \mathbb{P}\mbox{-a.s.}
$$
where $t_{n}^{\e},s_{n}^{\e}$ satisfying
$\pi_{n,\e}(-t_{n}^{\e}\log \| D_xT_\e
(n,\om)\|)=0=\pi_{n,\e}(-s_{n}^{\e}\log m(D_xT_\e(n,\om)))$.
 Furthermore, by theorem \ref{dl22} the Hausdorff dimension
of $L$ is the unique $t_0\geq 0$ with $\pi_{f|_{L}}(-t_0\Psi)=0$,
where $\Psi=\{ \log m(D_xf^n): x\in L, n\in \mathbb{N}\}$.

\textbf{Step 5}: We first note that, for each fixed positive
integer $n$, we have
\begin{eqnarray*}
\lim_{\e\rightarrow 0} \| \log \| D_xT_{\e}(n,\om)\|-\log \|
D_xf^n\| \|_{\a}=0
\end{eqnarray*}
and
\begin{eqnarray*}
\lim_{\e\rightarrow 0}\|\log m(D_xT_{\e}(n,\om))-\log
m(D_xf^n)\|_{\a}=0\ in \ L^1(\mathbb{P}),
\end{eqnarray*}
since $|\log \| D_xT_{\e}(n,\om)^{\pm}\||$ is uniformly bounded
for the fixed positive integer $n$. By the  proposition
\ref{mt31}, for each fixed positive integer $n$,  we have
$$
\lim_{\e\rightarrow 0}\frac{1}{n}\pi_{n,\e}(-t\log \|
D_xT_\e(n,\om)\| )=\frac{1}{n}\pi_{f^n|_{L}}(-t\log \| D_xf^n \|)
$$
and
$$
\lim_{\e\rightarrow 0}\frac{1}{n}\pi_{n,\e}(-t\log
m(D_xT_\e(n,\om)) )=\frac{1}{n}\pi_{f^n|_{L}}(-t\log m(D_xf^n))
$$
for each  $t\geq 0$. Moreover, by proposition 2.2 in \cite{cao1}
and proposition \ref{jiamt23}  we have
 $$
 \lim_{n\rightarrow\infty}\frac{1}{n}\pi_{f^n|_{L}}(-t\log
\| D_xf^n
\|)=\lim_{n\rightarrow\infty}\frac{1}{n}\pi_{f^n|_{L}}(-t\log
m(D_xf^n))=\pi_{f|_L}(-t\Psi).
 $$
 Hence, we obtain for each $t\geq 0$ that
 $$
 \begin{array}{ll}
 \displaystyle\lim_{n\rightarrow\infty}\lim_{\e\rightarrow 0}\frac{1}{n}\pi_{n,\e}(-t\log \|
D_xT_\e(n,\om)\|
)&=\displaystyle\lim_{n\rightarrow\infty}\lim_{\e\rightarrow
0}\frac{1}{n}\pi_{n,\e}(-t\log m(D_xT_\e(n,\om)) ) \\
&=\pi_{f|_L}(-t\Psi).
 \end{array}
 $$

\textbf{Step 6}: To complete the proof, given $\delta >0$. Since
$t\mapsto \pi_{f|_L}(-t\Psi)$ is strictly decreasing, there exist
$N>0, \e_0>0$ such that for each $ \e\leq \e_0$, we have
$$
\pi_{N,\e}(-(t_0+\delta)\log \| D_x
T_\e(N,\om)\|)<0<\pi_{N,\e}(-(t_0-\delta)\log \| D_x
T_\e(N,\om)\|)
$$
and
$$
\pi_{N,\e}(-(t_0+\delta)\log m( D_x
T_\e(N,\om)))<0<\pi_{N,\e}(-(t_0-\delta)\log m( D_x T_\e(N,\om))).
$$
This immediately implies
\begin{eqnarray} \label{jiads44}
t_0-\delta <t_{N}^{\e}\leq \dim_{H} (h_{\e}(\om)L)\leq
s_{N}^{\e}<t_0+\delta.
\end{eqnarray}
The desired result then immediately follows.
\end{proof}

\begin{remark} \label{zhu4} \rm
\it (1) In \cite{bog}, Bogensch\"{u}tz and Ochs proved that the
Hausdorff dimension of a conformal repeller is stable under random
perturbations. Using  their ideas, we show that the same is true
for average conformal repeller. The differences between theorem
\ref{dl41} and Bogensch\"{u}tz and Ochs's theorem   are:

i) In order to use the corollary 3.5 in \cite{bog}, it is the same
from step 1 to step 3;

ii) In order to prove the Hausdorff dimension of average conformal
repeller is stable under random perturbation, we should consider
the iteration of the RDS from step 4 to step 6. And this process
need the technic of sub-additive topological pressure and
sup-additive topological
 pressure. In \cite{bog}, the authors need not consider the
 iteration of the RDS, so they need only additive topological
 pressure.

 (2) Since the bundle $T_\e$ is uniformly expanding on the bundle $\{ J_\e(\w)\}_{\w\in\Omega}$, the result
in \cite{liu} told us that there exists a  equilibrium states of
the  topological pressure $\pi_\e$. Then modifying subtly the
proof in \cite{ban} we can get that the zero of the sub-additive
topological pressure is the upper bound of the Hausdorff dimension
of the bundle $\{ J_\e(\w)\}_{\w\in\Omega}$.
\end{remark}

\begin{proposition} \label{mt42} \rm
\it Under the conditions of theorem \ref{dl41}, we have
$$
\lim_{\e\rightarrow 0}h_{top}^{(r)}(T_\e)=h_{top}(f),
$$
where $h_{top}^{(r)}(T_\e)$ denote the  topological entropy of the
random dynamical system $T_{\e}$ generated by the random
perturbation of $f$ and $h_{top}(f)$ denote the classical
topological entropy of deterministic dynamical system.
\end{proposition}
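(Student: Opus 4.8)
The plan is to recognize the topological entropy as the topological pressure evaluated at the zero potential and then invoke Proposition \ref{mt31}. Recall that for the deterministic system $f$ restricted to $J$ one has $h_{top}(f)=\pi_f(0)$, where $0$ denotes the function identically equal to zero on $X=\overline{U}$; this is the familiar fact that the classical topological pressure of the null potential coincides with the topological entropy (see \cite{walters}). Likewise, by Remark \ref{jiazhu2}(i) and the definition of the random additive topological pressure, the random topological entropy of the bundle RDS $T_\e$ on the random repeller $J_\e$ is exactly $h_{top}^{(r)}(T_\e)=\pi_{T_\e}(0)$, namely the random pressure $\pi_{T_\e}(\varphi)$ with $\varphi\equiv 0$ (compare \cite{bog,kif1}).

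First I would observe that we are working under the hypotheses of Theorem \ref{dl41}, so by Proposition \ref{mt41} the family $\{T_\e\}_{\e>0}$ is a structurally stable random perturbation of $f$ on $J$. This is precisely the standing assumption needed to apply Proposition \ref{mt31}.

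Next I would apply Proposition \ref{mt31} to the constant family $\varphi_\e\equiv 0$ together with the limiting potential $\phi\equiv 0$. The zero function trivially belongs to $C^{\a}(X,\mathbb{R})$, and the required convergence $\lim_{\e\rightarrow 0}\|\varphi_\e-\phi\|_{\a}=0$ in $L^1(\mathbb{P})$ holds automatically, since $\varphi_\e-\phi$ vanishes identically for every $\e$. Proposition \ref{mt31} then yields
$$
\lim_{\e\rightarrow 0}h_{top}^{(r)}(T_\e)=\lim_{\e\rightarrow 0}\pi_{T_\e}(0)=\pi_f(0)=h_{top}(f),
$$
which is the desired conclusion.

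The argument is thus essentially immediate once the entropy is identified with the pressure of the null potential, and there is no serious analytic obstacle. The only point requiring care is to confirm that the pressure functional $\pi_{T_\e}$ appearing in Proposition \ref{mt31} is computed for the RDS restricted to the random repeller $J_\e$ (rather than on the ambient bundle over $\overline{V}$), so that $\pi_{T_\e}(0)$ genuinely coincides with the random topological entropy $h_{top}^{(r)}(T_\e)$ as defined; this is consistent with the construction in Step 4 of the proof of Theorem \ref{dl41}, where the pressure of $T_\e$ is always taken on the fibers $J_\e(\om)$.
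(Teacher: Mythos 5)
Your proposal is correct and coincides with the paper's own proof, which likewise deduces the result from Proposition \ref{mt31} by taking the potential functions to be identically zero; you have merely spelled out the identification of entropy with the pressure of the null potential, which the paper leaves implicit.
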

\begin{proof}
This can be immediately deduced from proposition \ref{mt31} by
taking the potential functions to be the zero-valued functions.
\end{proof}

\noindent {\bf Acknowledgements.}Part of this work is done when
authors visited Taida institute for mathematical sciences, authors
would like thank the warm hospitality of the host.

  \end{document}